\numberwithin{equation}{section}
\newtheorem{theorem}{Theorem}[section]
\newtheorem{lemma}[theorem]{Lemma}
\newtheorem{proposition}[theorem]{Proposition}
\newtheorem{corollary}[theorem]{Corollary}
\theoremstyle{definition}
\newtheorem{def-prop}[theorem]{Definition-Proposition}
\newtheorem{remark}[theorem]{Remark}
\newtheorem{example}[theorem]{Example}
\DeclareMathOperator{\reg}{reg}
\DeclareMathOperator{\depth}{depth}
\DeclareMathOperator{\Ass}{Ass}
\DeclareMathOperator{\lin}{lin}
\newcommand{\ZZ}{{\mathbb Z}}
\newcommand{\NN}{{\mathbb N}}
\def\mm{{\mathfrak m}}
\def\nn{{\mathfrak n}}
\def\MM{{\mathfrak M}}
\def\1{{\bf 1}}
\def\0{{\bf 0}}
\begin{document}


\title{Depth and regularity of powers of sums of ideals}

\author{Huy T\`ai H\`a}
\address{Tulane University \\ Department of Mathematics \\
6823 St. Charles Ave. \\ New Orleans, LA 70118, USA}
\email{tha@tulane.edu}
\urladdr{http://www.math.tulane.edu/$\sim$tai/}

\author{Ngo Viet Trung}
\address{Institute of Mathematics \\ Vietnam Academy of Science and Technology \\ 18 Hoang Quoc Viet \\ Hanoi, Vietnam}
\email{nvtrung@math.ac.vn}
\urladdr{http://math.ac.vn/en/component/staff/?task=getProfile\&staffID=64}

\author{Tran Nam Trung}
\address{Institute of Mathematics \\ Vietnam Academy of Science and Technology, \\ 18 Hoang Quoc Viet \\ Hanoi, Vietnam}
\email{tntrung@math.ac.vn}
\urladdr{http://math.ac.vn/en/component/staff/?task=getProfile\&staffID=65}

\keywords{power of ideals, sum of ideals, depth, regularity, asymptotic behavior}
\subjclass[2000]{13C05, 14H20}
\thanks{We would like to thank Vietnam Institute for Advanced Study in Mathematics  for the hospitality during our visit in 2014, when we started to work on this paper. The first named author is partially supported by the Simons Foundation (grant \#279786). The second author  is supported by Vietnam National Foundation for Science and Technology Development under grant number 101.04-2014.52.}

\begin{abstract}
Given arbitrary homogeneous ideals $I$ and $J$ in polynomial rings $A$ and $B$ over a field $k$, we investigate the depth and the Castelnuovo-Mumford regularity of powers of the sum $I+J$ in $A \otimes_k B$ in terms of those of $I$ and $J$. Our results can be used to study the behavior of the depth and regularity functions of powers of an ideal. For instance, we show that such a depth function can take as its values any infinite non-increasing sequence of non-negative integers.
\end{abstract}
\maketitle


\section*{Introduction}

The motivation of our work is the following general problem: given an ideal $Q$ and a natural number $n$, what can be said about the power $Q^n$? Asymptotically, as $n$ gets large, $Q^n$ often exhibits nice behaviors. However, for small or particular values of $n$, there is no general approach to investigate properties of $Q^n$. To address this problem, one usually considers special classes of ideals that possess further structures.
An instance is the class of squarefree monomial ideals which are associated to combinatorial objects, such as (hyper)graphs and simplicial complexes; these are unions of disconnected structures that correspond to sums of ideals in different variables. We are naturally led to the following question.  \medskip\par

\noindent{\bf Question 1}.
Let $A = k[x_1, \dots, x_r]$ and $B = k[y_1, \dots, y_s]$ be polynomial rings over a field $k$, and let $R = k[x_1, \dots, x_r,y_1, \dots, y_s]$. Let $I \subset A$ and $J \subset B$ be nonzero proper  ideals. What properties of $(I+J)^n \subset R$ can be described in terms of those of $I$ and $J$?
\medskip\par

Question 1 is also related to the study of singularities of the fiber product $X \times_k Y$ of two algebraic schemes $X$ and $Y$,
which are encoded in properties of powers of the defining ideal.
As far as we know, this seemingly basic question has not yet been addressed for arbitrary ideals $I$ and $J$. \par

In this paper, we initiate our study of Question 1 by focusing on the depth and the Castelnuovo-Mumford regularity of  $(I+J)^n$ when $I$ and $J$ are homogeneous ideals. Depth and Castelnuovo-Mumford regularity of powers of ideals have been the main objects of studies of various authors in the last few decades. For instance, see \cite{HaS, HH, HQ, HV, NV, TT1, TT2,Tr} for studies on depth of powers of ideals, and see \cite{ACH, Be, Ch, Co, CHT, EH, EU, Ha, Ko, Sw, TW} for studies on regularity of powers of ideals. Aside from giving answers to Question 1, our results also contribute to a better understanding of the asymptotic behavior of depth and regularity.  \par

Our first main result gives the following bounds for the depth and the regularity of $R/(I+J)^n$.
\smallskip\par

\noindent{\bf Theorem \ref{first bound}.}
For all $n \ge 1$, we have \par
{\rm (i)} $\depth R\big/(I+J)^n \ge$
$$\hspace*{5ex}\min_{i \in [1,n-1], \ j \in [1,n]} \{\depth A/I^{n-i} + \depth B/J^i + 1, \depth A/I^{n-j+1} + \depth B/J^j\},$$ \par
{\rm (ii)} $\reg R\big/(I+J)^n \le$
$$\max_{i \in [1,n-1], \ j \in [1,n]} \{\reg A/I^{n-i} + \reg B/J^i + 1, \reg A/I^{n-j+1} + \reg B/J^j \}.$$

The proof of Theorem \ref{first bound} is based on an approximation of $(I+J)^n$ by a sequence of subideals whose successive quotients have good representations. The depth and the regularity of these quotients
can be expressed in terms of those of $I$ and $J$ by a recent work of Hoa and Tam on mixed products of ideals
\cite{HT}. We also present results showing that the minimum and maximum values of the two terms in the bounds of Theorem \ref{first bound} are attainable (Propositions \ref{example 1}, \ref{example 2}, and \ref{variable}). These results particularly show that there are no general formulae for the depth and regularity of $R/(I+J)^n$.  \par

On the other hand, our next main result gives the following general formulae for the depth and regularity of $(I+J)^n/(I+J)^{n+1}$.
\medskip\par

\noindent{\bf Theorem \ref{equality}.}
For all  $n \ge 1$, we have\par
{\rm (i)} $\displaystyle \depth (I+J)^n/(I+J)^{n+1} = \min_{i+j=n}\{\depth I^i/I^{i+1} + \depth J^j/J^{j+1}\},$\par
{\rm (ii)} $\displaystyle \reg (I+J)^n/(I+J)^{n+1} = \max_{i+j=n}\{\reg  I^i/I^{i+1} + \reg J^j/J^{j+1}\}.$
\medskip\par

Theorem \ref{equality} is a consequence of the decomposition of $(I+J)^n/(I+J)^{n+1}$ as a direct sum of modules of the form $I^i/I^{i+1} \otimes_kJ^j/J^{j+1}$, whose depth and regularity can be estimated by invoking a formula of Goto and Watanabe for the local cohomology of tensor products over a field \cite{GW}. \par

We can use Theorems \ref{first bound} and \ref{equality} to compute $\depth R/(I+J)^n$ and $\reg (I+J)^n$ for $n$ sufficiently large.
It is known that the depth (resp., the regularity) of powers of an ideal is asymptotically a constant (resp., a linear function); see for example \cite{Br, CHT, Ko}. However, the exact values of this constant and linear function remain mysterious despite much effort from various authors \cite{Ch,EH,EU,HH,HQ}. A na\"ive intuition might suggest that the constant for depth and the linear function for regularity of $(I+J)^n$ could be obtained by ``adding up'' corresponding invariants of $I$ and $J$. The following formulae show otherwise, and give an evidence as why the problem is difficult.
\medskip\par

\noindent{\bf Theorem \ref{lim}.}
$\displaystyle  \lim_{n\to \infty} \depth R/(I+J)^n =$\smallskip\par
\centerline{$\displaystyle  \min\big\{\lim_{i \to \infty}\depth A/I^i + \min_{j \ge 1}\depth B/J^j,\, \min_{i \ge 1} \depth A/I^i + \lim_{j \to \infty}\depth B/J^j\big\}.$}
\medskip\par

\noindent{\bf Theorem \ref{r-asymptotic}.}
Suppose that $\reg I^n = dn + e$ and $\reg J^n = cn+f$ for $n \gg 0$, where $c \ge d$.
Let $\lin(I)$ and $\lin(J)$ denote the least integer $m$ such that $\reg I^n = dn + e$ and $\reg J^n = cn+f$ for $n \ge m$, respectively.
Set $e^* = \max_{i \le \lin(I)}\{\reg I^i - ci\}$ and $f^* = \max_{j \le \lin(J)}\{ \reg J^j - dj\}$.
For $n \gg 0$ we have
$$\reg (I+J)^n =
\begin{cases} c(n+1)  + f+e^*-1 & \text{if } c > d,\\
 d(n+1) + \displaystyle \max\{f+e^*, e+f^*\}-1 & \text{if } c = d.
 \end{cases}$$

To prove Theorems \ref{lim} and \ref{r-asymptotic} we make use of the fact that for $n$ sufficiently large, $\depth R/(I+J)^n = \depth (I+J)^{n-1}/(I+J)^n$ and $\reg (I+J)^n = \reg (I+J)^{n-1}/(I+J)^n+1$. Here, the first equality is due to Herzog and Hibi \cite{HH}.
We also give an estimate for the least number $m$ such that  $\reg (I+J)^n$ becomes a linear function for $n \ge m$. However, we are unable to do the same for the index of stability of the function $\depth R/(I+J)^n$.
\par

Our results have interesting applications on the depth function of powers of an ideal. For example, we show that $R/(I+J)^i$ is Cohen-Macaulay for all $i \le n$ if and only if $A/I^i$ and $B/J^i$ are Cohen-Macaulay for all $i \le n$ (Proposition \ref{CM}). We also show that when $I, J$ are squarefree monomial ideals, $\depth R/(I+J)^n$ is a constant function if and only if so are $\depth A/I^n$ and $\depth B/I^n$ (Proposition \ref{HV}). This property improves a recent result of Herzog and Vladiou in \cite{HV} substantially. Furthermore, in \cite{HH}, Herzog and Hibi conjectured that the function $\depth R/Q^n$ of a monomial ideal $Q$ in a polynomial ring $R$ over $k$ can be any convergent non-negative integer valued function, and gave the proof for all non-decreasing functions and a large class of non-increasing functions. We complete the statement for all non-increasing functions (Theorem \ref{non-increasing}). \par

We should point out that even though stated for homogeneous ideals, our results on the depth in Theorem \ref{first bound}, Theorem \ref{equality}, and Theorem \ref{lim}  aso hold for non-homogeneous ideals $I,J$ if $A$, $B$, and $R$ are replaced by their localizations at maximal homogeneous ideals. \par

The paper is outlined as follows. In Section 1, we collect notations, terminology and basic results on depth and regularity. In Section 2, we give bounds for $\depth R/(I+J)^n$ and $\reg R/(I+J)^n$. The formulae for $\depth  (I+J)^n/(I+J)^{n+1}$ and $\reg (I+J)^n/(I+J)^{n+1}$ are proved in Section 3. Section 4 and Section 5 are devoted to the asymptotic behavior of $\depth R/(I+J)^n$ and $\reg (I+J)^n$, respectively.


\section{Preliminaries}

In this section, we collect notations, terminology and basic results used in the paper. We follow standard texts \cite{BH, E}.  \par

Let $(A,\mm)$ be a local ring and let $M$ be a finitely generated $A$-module.
An $M$-sequence is a sequence $z_1,\dots,z_r \in \mm$ such that
$(x_1,\dots,x_{i-1})M:x_i = (x_1,\dots,x_{i-1})M$
for $i = 1,\dots,r$. The {\em depth} of $M$, denoted by $\depth M$, is the length of a (or any) maximal $M$-sequence.
We always have $\depth M \le \dim M$, and if $\depth M = \dim M$ then $M$ is called a Cohen-Macaulay module.
\par

Let $H^i_\mm(M)$ denote the $i$th local cohomology module of $M$ with support $\mm$, for $i \ge 0$. By Grothendieck's theorem \cite[Theorem 3.5.7]{BH}, we have
\begin{align*}
\depth M = \min \{i ~|~ H^i_\mm(M) \not= 0\}.
\end{align*}
In this paper we shall use this characterization as the definition of $\depth M$.\par

Now, let $A$ be a standard graded algebra and let $\mm$ be the maximal homogeneous ideal of $A$. It is well-known that the notion of depth can also be defined similarly for finitely generated graded $A$-modules, and that these notions of depth over local and graded rings share the same properties and characterizations \cite[Remark 3.6.18]{BH}. This allows us to refer to depth of a module without specifying whether the underlying ring is local or graded. \par

In the graded setting, local cohomology modules $H^i_\mm(M)$ are also graded $A$-modules, and we can introduce the invariants $a^i(M) = \max \{t ~\big|~ [H^i_\mm(M)]_t \not= 0\}$, where by convention $a^i(M) = -\infty$ if $H^i_\mm(M) = 0$. The \emph{Castelnuovo-Mumford regularity} (or simply \emph{regularity}) of $M$ is defined by
$$\reg M := \max_{i\ge 0} \{a^i(M) +i\}.$$

If $A$ is a polynomial ring over a field then $\reg M$ controls the complexity of the graded structure of $M$ in the following sense. Let
$$0 \rightarrow \bigoplus_{j \in \ZZ} A(-j)^{\beta_{p,j}(M)} \rightarrow \cdots \rightarrow \bigoplus_{j \in \ZZ} A(-j)^{\beta_{0,j}(M)} \rightarrow M \rightarrow 0$$
be a minimal free resolution of $M$. Then
$$\reg(M) := \max \{j-i ~|~ \beta_{i,j}(M) \not= 0\}$$
by a result of Goto and Eisenbud \cite{EG}. \par

Given a short exact sequence of finitely generated graded $A$-modules
$$0 \rightarrow M \rightarrow N \rightarrow P \rightarrow 0,$$
by taking the derived long exact sequence of local cohomology modules, we can easily verify the following lemmas.

\begin{lemma} \label{depth} \quad
\begin{enumerate}
\item[(i)] $\depth N \ge \min \{\depth M, \depth P\},$
\item[(ii)] $\depth M \ge \min \{\depth N, \depth P+1\},$
\item[(iii)] $\depth P \ge \min \{\depth M-1, \depth N\},$
\item[(iv)] $\depth M = \depth P+1$ if  $\depth N > \depth P$,
\item[(v)] $\depth P = \depth M-1$ if  $\depth N > \depth M$,
\item[(vi)] $\depth P = \depth N$ if  $\depth N < \depth M$,
\item[(vii)] $\depth N = \depth M$ if  $\depth P \ge \depth M$.
\end{enumerate}
\end{lemma}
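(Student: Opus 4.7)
The plan is to derive all seven parts from a single tool: the long exact sequence of local cohomology
$$\cdots \to H^{i-1}_\mm(P) \to H^i_\mm(M) \to H^i_\mm(N) \to H^i_\mm(P) \to H^{i+1}_\mm(M) \to \cdots$$
attached to the given short exact sequence, combined with the characterization $\depth X = \min\{i \ge 0 : H^i_\mm(X) \neq 0\}$ adopted in the preliminaries. Writing $a = \depth M$, $b = \depth N$, $c = \depth P$ for brevity, every assertion reduces to tracking the smallest index at which each of the three local cohomology modules first becomes nonzero.

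For the three inequalities (i)--(iii) the template is uniform: fix an index $i$ strictly below the claimed lower bound, read off vanishing of the two neighboring terms in the LES, and conclude vanishing of the remaining term. Concretely, (i) comes from the piece $H^i_\mm(M) \to H^i_\mm(N) \to H^i_\mm(P)$ applied for $i < \min(a,c)$; (ii) from $H^{i-1}_\mm(P) \to H^i_\mm(M) \to H^i_\mm(N)$ applied for $i < \min(b, c+1)$; and (iii) from $H^i_\mm(N) \to H^i_\mm(P) \to H^{i+1}_\mm(M)$ applied for $i < \min(a-1, b)$.

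For the four equalities (iv)--(vii) the plan is to pair the relevant inequality from (i)--(iii) with a matching nonvanishing witness extracted from the LES. For instance in (iv), assuming $b > c$, part (ii) already yields $a \ge c+1$, and then in the three-term stretch $H^c_\mm(N) \to H^c_\mm(P) \to H^{c+1}_\mm(M)$ the left neighbor vanishes while $H^c_\mm(P) \neq 0$ by definition of $c$, so the boundary map injects $H^c_\mm(P)$ into $H^{c+1}_\mm(M)$ and forces $a \le c+1$. Parts (v), (vi), (vii) are entirely analogous: in each case one of the three inequalities gives one direction, and the other direction comes from observing that a connecting (or inclusion) map in a suitable stretch of the LES becomes injective as soon as the preceding term vanishes for degree reasons, pushing the nonvanishing of a designated $H^\bullet_\mm$ to the neighbor that certifies the desired equality.

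There is essentially no mathematical obstacle here; the argument is pure index-chasing in a long exact sequence. The only points requiring mild care are the conventions $H^i_\mm(X) = 0$ for $i < 0$ (needed so that (ii) still works at $i = 0$) and $\depth X = +\infty$ when $X = 0$, under which all stated inequalities and equalities degenerate consistently.
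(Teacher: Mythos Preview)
Your proposal is correct and follows exactly the approach the paper indicates: the paper simply states that one verifies the lemma ``by taking the derived long exact sequence of local cohomology modules,'' and your write-up spells out precisely that index-chase in the long exact sequence using the characterization $\depth X = \min\{i : H^i_\mm(X)\neq 0\}$. There is no difference in method, only in the level of detail.
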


\begin{lemma} \label{reg} \quad
\begin{enumerate}
\item[(i)] $\reg N  \le \max \{\reg M, \reg P\},$
\item[(ii)] $\reg M \le \max \{\reg N, \reg P +1\},$
\item[(iii)] $\reg P \le \max \{\reg M - 1, \reg N\},$
\item[(iv)] $\reg M = \reg P+1$ if  $\reg N < \reg P$,
\item[(v)] $\reg P = \reg M-1$ if  $\reg N < \reg M$,
\item[(vi)] $\reg P = \reg N$ if  $\reg N > \reg M$,
\item[(vii)] $\reg N = \reg M$ if  $\reg P +1 < \reg M$. 
\end{enumerate}
\end{lemma}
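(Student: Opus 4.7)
The plan is to derive all seven statements from the long exact sequence of local cohomology. Applying $H^\bullet_\mm(-)$ to the short exact sequence $0 \to M \to N \to P \to 0$ yields
$$\cdots \to H^{i-1}_\mm(P) \to H^i_\mm(M) \to H^i_\mm(N) \to H^i_\mm(P) \to H^{i+1}_\mm(M) \to \cdots,$$
exact in each degree. Combined with the definition $\reg(-) = \max_i\{a^i(-) + i\}$, the three-term pieces of this sequence immediately yield (i)--(iii).

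Specifically, exactness of $H^i_\mm(M) \to H^i_\mm(N) \to H^i_\mm(P)$ in each degree gives $a^i(N) \le \max\{a^i(M), a^i(P)\}$, which after adding $i$ and taking the maximum over $i$ gives (i). The piece $H^{i-1}_\mm(P) \to H^i_\mm(M) \to H^i_\mm(N)$ gives $a^i(M) + i \le \max\{a^{i-1}(P) + (i-1) + 1,\ a^i(N) + i\} \le \max\{\reg P + 1,\ \reg N\}$, proving (ii); and the piece $H^i_\mm(N) \to H^i_\mm(P) \to H^{i+1}_\mm(M)$ gives (iii) by the analogous bookkeeping.

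For the sharp statements, (vi) and (vii) follow purely by algebraic manipulation of (i)--(iii): when one argument of the bound is strictly dominant, the maximum must be attained at the other, forcing equality. For example, for (vii), if $\reg P + 1 < \reg M$, then (i) yields $\reg N \le \max\{\reg M, \reg P\} = \reg M$, while (ii) yields $\reg M \le \max\{\reg N, \reg P + 1\}$, which combined with $\reg P + 1 < \reg M$ forces $\reg N \ge \reg M$.

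The two less routine cases are (iv) and (v), where one half is supplied by (ii) or (iii) but the matching lower bound requires a direct appeal to the long exact sequence. For (v), assume $\reg N < \reg M$ and pick $i$ with $a^i(M) + i = \reg M$. Then $a^i(N) + i \le \reg N < \reg M$, so $[H^i_\mm(N)]_{a^i(M)} = 0$; by exactness of $H^{i-1}_\mm(P) \to H^i_\mm(M) \to H^i_\mm(N)$ the nonzero class in $[H^i_\mm(M)]_{a^i(M)}$ must lift to a nonzero class in $[H^{i-1}_\mm(P)]_{a^i(M)}$, yielding $\reg P \ge a^{i-1}(P) + (i-1) \ge \reg M - 1$; together with (iii) this gives (v). Statement (iv) is proved by the symmetric argument using the piece $H^j_\mm(N) \to H^j_\mm(P) \to H^{j+1}_\mm(M)$ at an index $j$ realizing $\reg P$. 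I expect no real obstacle: the entire argument is standard bookkeeping with degree shifts, and similar statements appear throughout the regularity literature; the only care needed is to consistently select the correct three-term piece for each case.
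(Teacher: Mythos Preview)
Your proposal is correct and follows exactly the approach the paper indicates: the paper merely states that these inequalities are obtained ``by taking the derived long exact sequence of local cohomology modules'' and leaves the verification to the reader, which is precisely the bookkeeping you have written out. Your treatment is in fact more detailed than the paper's, and the only point worth a brief remark is that in (v) the index $i$ realizing $\reg M$ is automatically $\ge 1$ (since $H^0_\mm(M) \hookrightarrow H^0_\mm(N)$), so invoking $H^{i-1}_\mm(P)$ is legitimate.
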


These basic lemmas will be used in this paper without references.
Using the fact that a polynomial ring over a field is Cohen-Macaulay, and the characterization of regularity in terms of the minimal free resolution, one can deduce the following well-known lemma, which will also be used in this paper without references.

\begin{lemma}
Let $A$ be a polynomial ring over a field and let $I$ be a non-zero proper homogeneous ideal in $A$. Then \par
{\rm (i)} $\depth I = \depth A/I + 1$, \par
{\rm (ii)} $\reg I = \reg A/I +1$.
\end{lemma}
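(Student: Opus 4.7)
The plan is to apply Lemmas \ref{depth} and \ref{reg} to the short exact sequence
$$0 \to I \to A \to A/I \to 0,$$
exploiting two structural facts about the ambient polynomial ring: $A$ is Cohen--Macaulay with $\depth A = \dim A = n$ (where $n$ is the number of variables), and $\reg A = 0$.

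For part (i), since $I$ is nonzero and proper, $A/I$ is a nonzero module with $\dim A/I < n$, so
$$\depth A/I \le \dim A/I < n = \depth A.$$
Applied to the displayed sequence with $M = I$, $N = A$, $P = A/I$, Lemma \ref{depth}(iv) immediately yields $\depth I = \depth A/I + 1$.

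For part (ii), I would prove the two inequalities separately rather than appealing to the analogous Lemma \ref{reg}(iv), which requires the strict inequality $\reg A < \reg A/I$ and so fails in the edge case where $I$ is generated by linear forms (then $\reg A/I = 0 = \reg A$). Lemma \ref{reg}(ii) gives $\reg I \le \max\{\reg A,\, \reg A/I + 1\} = \reg A/I + 1$, using $\reg A = 0$ and $\reg A/I \ge 0$ (the latter holds because $A/I$ is cyclic, generated in degree $0$). For the reverse inequality, Lemma \ref{reg}(iii) gives $\reg A/I \le \max\{\reg I - 1,\, \reg A\} = \max\{\reg I - 1,\, 0\}$, and since $I$ is a nonzero proper homogeneous ideal its minimal generators live in positive degrees, forcing $\reg I \ge 1$. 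Hence the right-hand maximum equals $\reg I - 1$, so $\reg A/I + 1 \le \reg I$, and we conclude $\reg I = \reg A/I + 1$. The only real step requiring care is this two-sided squeeze that sidesteps the linear-forms edge case.
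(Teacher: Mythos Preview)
Your proof is correct. The paper does not actually spell out a proof of this lemma; it only remarks that one deduces it ``using the fact that a polynomial ring over a field is Cohen-Macaulay, and the characterization of regularity in terms of the minimal free resolution.'' For part (i) your argument is exactly what the paper has in mind. For part (ii) you take a slightly different route: the paper points to the free resolution characterization (the minimal free resolution of $I$ is obtained from that of $A/I$ by dropping the initial $A$, which shifts the homological index by one and hence shifts the regularity by one), whereas you argue via the short exact sequence and Lemmas \ref{reg}(ii),(iii). Both are standard; your approach has the virtue of staying entirely within the local-cohomology framework already set up in the paper, and your explicit treatment of the edge case $\reg A/I = 0$ is a nice touch that the free resolution argument would also need to address.
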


Let us conclude this section by fixing some notations for the remaining of the paper. Let $k$ be a field, and let $A = k[x_1,\dots,x_r]$ and $B = k[y_1,\dots,y_s]$ be polynomial rings over $k$.
Let $I \subset A$ and $J \subset B$ be arbitrary nonzero proper homogeneous ideals.
For convenience, we also use $I$ and $J$ to denote the extensions of $I$ and $J$ in
the polynomial ring $R = k[x_1,\dots,x_r,y_1,\dots,y_s]$. The main objectives of our investigation are the depth and the regularity of powers of the sum $I+J$ in $R$.


\section{Depth and regularity of $R/(I+J)^n$} \label{sec.regular}

The aim of this section is to give bounds for the depth and the regularity of $(I+J)^n$ in terms of
those of powers of $I$ and $J$. We shall start with the following observation.

\begin{lemma} \label{L1} \cite[Lemmas 1.1]{HT}
$IJ = I \cap J$.
\end{lemma}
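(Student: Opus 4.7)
The inclusion $IJ\subseteq I\cap J$ is automatic for any two ideals in any ring, so the entire content is to establish $I\cap J\subseteq IJ$. The key structural fact to exploit is that, after extending to $R$, the ideals $I$ and $J$ live in disjoint sets of variables; more invariantly, $R=A\otimes_k B$ as $k$-algebras, $IR = I\otimes_k B$, and $JR = A\otimes_k J$. My plan is to prove $(I\otimes_k B)\cap (A\otimes_k J) = I\otimes_k J$ inside $A\otimes_k B$ by choosing well-adapted $k$-bases, and then to observe that every pure tensor $u\otimes v$ with $u\in I$, $v\in J$ lies in $IJ$.

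\emph{Step 1 (adapted bases).} Because $I$ is a $k$-subspace of $A$, I can pick a $k$-basis $\{a_\alpha\}_{\alpha\in\Lambda}$ of $A$ such that the subfamily indexed by some $\Lambda_0\subseteq\Lambda$ is a $k$-basis of $I$. In the same way, choose a $k$-basis $\{b_\beta\}_{\beta\in M}$ of $B$ with a distinguished subfamily indexed by $M_0\subseteq M$ forming a $k$-basis of $J$. Then the products $\{a_\alpha b_\beta\}_{(\alpha,\beta)\in\Lambda\times M}$ are a $k$-basis of $R$, since $R=A\otimes_k B$.

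\emph{Step 2 (identify the intersection).} Since $R$ is a free $A$-module with basis $\{b_\beta\}_{\beta\in M}$, an element $f=\sum_{\alpha,\beta} c_{\alpha\beta}\,a_\alpha b_\beta$ of $R$ lies in $IR$ if and only if for every fixed $\beta$ the element $\sum_\alpha c_{\alpha\beta}a_\alpha$ lies in $I$, equivalently $c_{\alpha\beta}=0$ whenever $\alpha\notin\Lambda_0$. The symmetric statement holds for $JR$: $f\in JR$ iff $c_{\alpha\beta}=0$ whenever $\beta\notin M_0$. Intersecting, $f\in (IR)\cap(JR)$ iff $c_{\alpha\beta}=0$ outside $\Lambda_0\times M_0$, i.e.
\[
I\cap J \;=\; \operatorname{span}_k\{a_\alpha b_\beta : \alpha\in\Lambda_0,\ \beta\in M_0\}.
\]

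\emph{Step 3 (conclusion).} Each basis vector in the right-hand side is a product of an element of $I$ with an element of $J$, hence lies in $IJ$. Taking $k$-linear combinations stays inside $IJ$, so $I\cap J\subseteq IJ$.

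\emph{Expected obstacle.} There is no deep obstacle; the only thing that requires care is legitimizing the ``coefficient comparison'' in Step 2, i.e.\ the fact that freeness of $R$ over $A$ (respectively $B$) lets us read off membership in $IR$ (respectively $JR$) from the basis expansion. An alternative, slightly slicker route that avoids bases is to note that, since $B$ is $k$-flat, tensoring an $A$-free resolution of $A/I$ with $B$ over $k$ yields an $R$-free resolution of $R/I$; tensoring further with $R/J=A\otimes_k(B/J)$ over $R$ collapses to tensoring the original resolution with $B/J$ over $k$, which is exact, so $\mathrm{Tor}_1^R(R/I,R/J)=0$ and hence $IJ=I\cap J$. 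I would include the basis proof as the primary argument and mention the $\mathrm{Tor}$ viewpoint as a remark.
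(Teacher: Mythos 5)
Your proof is correct. There is nothing internal to compare it against: the paper does not prove this lemma at all, but quotes it from Hoa and Tam \cite[Lemma 1.1]{HT}, so your write-up in effect supplies the argument the paper outsources. Your basis argument is sound and complete: since $R=A\otimes_k B$, the extended ideals satisfy $IR=I\otimes_k B$ and $JR=A\otimes_k J$, and with a $k$-basis of $A$ adapted to $I$ and a $k$-basis of $B$ adapted to $J$, the decompositions $IR=\bigoplus_\beta I\,b_\beta$ and $JR=\bigoplus_\alpha a_\alpha\,J$ fully legitimize the coefficient comparison in your Step 2, giving $IR\cap JR=I\otimes_k J$, whose spanning pure tensors visibly lie in $IJ$; the reverse inclusion $IJ\subseteq I\cap J$ is trivial, as you say. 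Two features of your approach are worth noting. First, it nowhere uses homogeneity or that $A$ and $B$ are polynomial rings --- it works for arbitrary ideals in arbitrary $k$-algebras --- which is consistent with the paper's remark that its depth results extend to non-homogeneous ideals; moreover the identification $IJ=I\otimes_k J$ that falls out of your Step 2 is exactly the fact the paper invokes later, without proof, when it writes ``$IJ = I\otimes_k J$ for arbitrary ideals $I\subseteq A$ and $J\subseteq B$.'' Second, your $\Tor$ remark is also correct and genuinely slicker: $\Tor_1^R(R/I,R/J)\cong (I\cap J)/IJ$ by the standard long exact sequence applied to $0\to I\to R\to R/I\to 0$, and if $F_\bullet$ is an $A$-free resolution of $A/I$ then $F_\bullet\otimes_k B$ is an $R$-free resolution of $R/IR$ (exactness is preserved since $B$ is $k$-flat), while applying $-\otimes_R R/JR$ collapses it to $F_\bullet\otimes_k(B/J)$, which is again exact in positive degrees because every module over the field $k$ is flat; hence the $\Tor$ vanishes and $I\cap J=IJ$. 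Either version would serve as a self-contained substitute for the citation.
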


Lemma \ref{L1} allows us to show that $(I+J)^n$ possesses an ascending sequence of subideals whose quotients of successive elements have a nice representation.

\begin{lemma} \label{intersection}
Let $Q_i = \sum_{j=0}^i I^{n-j}J^j$, $i = 0,\dots,n$. For $i \ge 1$ we have
$$Q_i/Q_{i-1} \cong  I^{n-i}J^i/I^{n-i+1}J^i.$$
\end{lemma}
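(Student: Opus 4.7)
The plan is to invoke the second isomorphism theorem and then identify the relevant intersection explicitly. Since $Q_i = Q_{i-1} + I^{n-i}J^i$, we have
\[
Q_i/Q_{i-1} \;\cong\; I^{n-i}J^i \big/ \bigl(Q_{i-1} \cap I^{n-i}J^i\bigr),
\]
so everything reduces to proving the identity $Q_{i-1} \cap I^{n-i}J^i = I^{n-i+1}J^i$. The inclusion $\supseteq$ will be immediate: $I^{n-i+1}J^i \subseteq I^{n-i}J^i$ because $I^{n-i+1}\subseteq I^{n-i}$, and $I^{n-i+1}J^i \subseteq I^{n-i+1}J^{i-1} \subseteq Q_{i-1}$ because $J^i \subseteq J^{i-1}$.

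For the reverse inclusion, which is the substance of the argument, I would exploit the tensor-product decomposition $R = A \otimes_k B$. Since $I \subseteq A$ and $J \subseteq B$ involve disjoint sets of variables, an $R$-ideal of the form $I^a J^b$ coincides, as a $k$-subspace of $A\otimes_k B$, with $I^a \otimes_k J^b$. I would then fix $k$-bases of $A$ and $B$ refining the finite filtrations
\[
I^n \subseteq I^{n-1} \subseteq \cdots \subseteq I^{n-i} \subseteq A \qquad \text{and} \qquad J^i \subseteq J^{i-1} \subseteq \cdots \subseteq J^0 = B,
\]
and take the resulting ``rectangular'' $k$-basis $\{a_\alpha \otimes b_\beta\}$ of $R$. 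By restriction this furnishes a $k$-basis of each $I^{n-j}J^j$ (namely the tensors with $a_\alpha \in I^{n-j}$ and $b_\beta \in J^j$), and hence of $Q_{i-1}$ by taking the union over $j = 0, \ldots, i-1$. A basis tensor $a_\alpha \otimes b_\beta$ then lies in $Q_{i-1} \cap I^{n-i}J^i$ if and only if it lies in $I^{n-i}J^i$ (so $a_\alpha \in I^{n-i}$ and $b_\beta \in J^i$) and also in some $I^{n-j}J^j$ with $j \le i-1$. Given the first condition, the second forces $a_\alpha \in I^{n-j}$ for some $j \le i-1$, i.e., $a_\alpha \in I^{n-i+1}$, so the intersection has as basis precisely the tensors lying in $I^{n-i+1} \otimes_k J^i = I^{n-i+1}J^i$.

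The main obstacle is exactly this reverse inclusion. One cannot appeal to a naive distributive law such as $I^{n-i}J^i \cap \sum_j I^{n-j}J^j = \sum_j (I^{n-i}J^i \cap I^{n-j}J^j)$, since the lattice of ideals of $R$ is not distributive in general. What rescues the argument is the decomposition of $R$ as a tensor product over the field $k$, which reduces the set-theoretic intersection to a transparent combinatorial intersection of sets of basis tensors. As an alternative route I might try to derive the identity from Lemma \ref{L1} and its iteration $I^aJ^b = I^a \cap J^b$ (which shows at least the termwise intersections $I^{n-j}J^j \cap I^{n-i}J^i$ reduce to $I^{n-j}J^i$), but pushing this past the sum still seems to require the bigraded or tensor-product bookkeeping above.
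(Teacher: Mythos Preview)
Your argument is correct, but the paper's proof of the reverse inclusion is considerably shorter and avoids the basis bookkeeping entirely. The key observation you are missing is that $Q_{i-1} \subseteq I^{n-i+1}$: every summand $I^{n-j}J^j$ with $j \le i-1$ sits inside $I^{n-j} \subseteq I^{n-i+1}$. Granting this, one line finishes the job:
\[
Q_{i-1} \cap I^{n-i}J^i \;\subseteq\; I^{n-i+1} \cap I^{n-i} \cap J^i \;=\; I^{n-i+1} \cap J^i \;=\; I^{n-i+1}J^i,
\]
the last equality being Lemma~\ref{L1}. So the ``alternative route'' you dismissed at the end actually works immediately once you replace the sum $Q_{i-1}$ by the single ideal $I^{n-i+1}$ containing it; no distributivity issue arises because you never intersect with the individual summands. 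Your tensor-basis argument is a valid substitute and makes the bigraded structure explicit, but it is doing more work than necessary here.
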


\begin{proof}
It can be seen that
$$Q_i/Q_{i-1} = I^{n-i}J^i + Q_{i-1}/Q_{i-1} \cong   I^{n-i}J^i/Q_{i-1}\cap  I^{n-i}J^i.$$
Thus, it suffices to show that
$Q_{i-1}\cap  I^{n-i}J^i = I^{n-i+1}J^i.$\par

Note that
$Q_{i-1} =  I^n +  I^{n-1}J + \cdots +  I^{n-i+1}J^{i-1}.$
Then $Q_{i-1}\cap  I^{n-i}J^i \supseteq I^{n-i+1}J^i$.
On the other hand, since $Q_{i-1} \subseteq I^{n-i+1}$, we have
$$Q_{i-1}\cap  I^{n-i}J^i  \subseteq I^{n-i+1} \cap I^{n-i}\cap J^i
= I^{n-i+1}\cap J^i = I^{n-i+1}J^i,$$
where the last equality follows from Lemma \ref{L1}.
Therefore, $Q_{i-1}\cap  I^{n-i}J^i = I^{n-i+1}J^i$ and we are done.
\end{proof}

Thanks to Lemma \ref{intersection}, the depth and regularity of $Q_i/Q_{i-1}$ can be estimated by invoking the following results of Hoa and Tam \cite{HT}.

\begin{lemma} \label{L2} \cite[Lemma 2.2 and Lemma 3.2]{HT} ~\par
{\rm (i)} $\depth R/IJ = \depth A/I + \depth B/J + 1$.\par
{\rm (ii)} $\reg R/IJ = \reg A/I + \reg B/J + 1$.
\end{lemma}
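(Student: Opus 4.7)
The plan is to exploit the Mayer--Vietoris short exact sequence coming from Lemma \ref{L1}, namely $IJ = I \cap J$, which gives
\[
0 \to R/(IJ) \to R/I \oplus R/J \to R/(I+J) \to 0, \qquad (\ast)
\]
and to read off the depth and regularity of the three outer terms by a K\"unneth--type computation. Since $R \cong A \otimes_k B$, we have $R/I \cong A/I \otimes_k B$, $R/J \cong A \otimes_k B/J$, and $R/(I+J) \cong A/I \otimes_k B/J$. Goto--Watanabe's formula \cite{GW}, $H^k_\mm(M \otimes_k N) \cong \bigoplus_{i+j=k} H^i_{\mm_A}(M) \otimes_k H^j_{\mm_B}(N)$, together with the fact that $A$ and $B$ are polynomial rings of regularity $0$ (so $H^s_{\mm_B}(B)$ is concentrated in cohomological degree $s$ with top internal degree $-s$, and symmetrically for $A$), immediately yields
\[
\depth R/I = \depth A/I + s, \quad \reg R/I = \reg A/I,
\]
\[
\depth R/J = r + \depth B/J, \quad \reg R/J = \reg B/J,
\]
\[
\depth R/(I+J) = \depth A/I + \depth B/J, \quad \reg R/(I+J) = \reg A/I + \reg B/J.
\]

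For (i), set $a = \depth A/I$ and $b = \depth B/J$. Because $I$ and $J$ are nonzero proper, $a \le r-1$ and $b \le s-1$, whence
\[
\depth R/(I+J) + 1 = a+b+1 \le \min\{a+s,\, r+b\} = \depth(R/I \oplus R/J).
\]
In particular $\depth R/(I+J) < \depth(R/I \oplus R/J)$, so Lemma \ref{depth}(iv) applied to $(\ast)$ forces $\depth R/(IJ) = \depth R/(I+J) + 1 = a+b+1$, which is exactly (i).

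For (ii), set $\alpha = \reg A/I$ and $\beta = \reg B/J$; both are non-negative because $A/I$ and $B/J$ are cyclic modules generated in degree $0$. Lemma \ref{reg}(ii) gives the upper bound $\reg R/(IJ) \le \max\{\max\{\alpha,\beta\},\, \alpha+\beta+1\} = \alpha+\beta+1$. When $\alpha,\beta \ge 1$ one has $\reg R/(I+J) = \alpha+\beta > \max\{\alpha,\beta\} = \reg(R/I \oplus R/J)$, and Lemma \ref{reg}(iv) applied to $(\ast)$ yields the matching lower bound $\reg R/(IJ) = \alpha+\beta+1$. The main obstacle is the edge case $\alpha = 0$ or $\beta = 0$ (equivalently, $I = \mm_A$ or $J = \mm_B$), in which $\reg(R/I \oplus R/J) = \reg R/(I+J)$ and Lemma \ref{reg}(iv) no longer applies. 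My plan here is to pass to the long exact sequence of local cohomology attached to $(\ast)$ and argue bidegree-wise: choose $p$ minimal with $a^p(A/I) + p = \alpha$ and $q$ analogously for $B/J$. The explicit K\"unneth decompositions of $H^*_\mm(R/I)$ and $H^*_\mm(R/J)$, combined with the inequalities $a^i(A/I) \le \alpha - i$ (strict for $i < p$ by minimality) and the concentration of $H^s_{\mm_B}(B)$ and $H^r_{\mm_A}(A)$ in internal degrees $\le -s$, $\le -r$, force
\[
[H^{p+q}_\mm(R/I \oplus R/J)]_{\alpha+\beta-p-q} = [H^{p+q+1}_\mm(R/I \oplus R/J)]_{\alpha+\beta-p-q} = 0,
\]
whereas Goto--Watanabe gives $[H^{p+q}_\mm(R/(I+J))]_{\alpha+\beta-p-q} \ne 0$. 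The connecting homomorphism is then an isomorphism onto $[H^{p+q+1}_\mm(R/(IJ))]_{\alpha+\beta-p-q}$, producing the desired lower bound $\reg R/(IJ) \ge (p+q+1) + (\alpha+\beta-p-q) = \alpha+\beta+1$.
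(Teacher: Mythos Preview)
Your argument is correct and complete. The paper itself does not supply a proof of this lemma; it is quoted verbatim from Hoa--Tam \cite{HT}, so there is nothing to compare against on the level of strategy. Your Mayer--Vietoris approach via $IJ=I\cap J$ together with the Goto--Watanabe K\"unneth formula is natural and is in fact the same circle of ideas the paper later packages as Lemma~\ref{tensor}.

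Two minor remarks. First, the parenthetical ``(equivalently, $I=\mm_A$ or $J=\mm_B$)'' is false: $\reg A/I=0$ only says $A/I$ has a linear resolution, e.g.\ $I=(x_1)\subset k[x_1,x_2]$ has $\reg A/I=0$. This does not affect your proof, since your edge--case argument is phrased for $\alpha=0$ or $\beta=0$ and never uses that description. Second, the direct local--cohomology computation you give for the edge case actually works uniformly: the vanishing of $[H^{p+q}_\mm(R/I)]_{\alpha+\beta-p-q}$ uses either $\beta>0$ (top degree $\le \alpha-p-q<\alpha+\beta-p-q$) or, when $\beta=0$, the fact that $q\le\dim B/J\le s-1$ forces $p+q-s<p$ so the strict inequality $a^{p+q-s}(A/I)<\alpha-(p+q-s)$ from minimality of $p$ kicks in; the vanishing in cohomological degree $p+q+1$ needs only $\alpha,\beta\ge 0$. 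So the case split, while harmless, is not really needed.
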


We are now ready to give bounds for the depth and the regularity of $R/(I+J)^n$ in terms of those of powers of $I$ and $J$ in $A$ and $B$, respectively.

\begin{theorem}\label{first bound}
For all $n \ge 1$, we have \par
{\rm (i)} $\depth R\big/(I+J)^n \ge$\par
\centerline{\hspace*{5ex}$\displaystyle \min_{i \in [1,n-1], \ j \in [1,n]} \{\depth A/I^{n-i} + \depth B/J^i + 1, \depth A/I^{n-j+1} + \depth B/J^j\},$} \par
{\rm (ii)} $\reg R\big/(I+J)^n \le$\par
\centerline{$\displaystyle\max_{i \in [1,n-1], \ j \in [1,n]} \{\reg A/I^{n-i} + \reg B/J^i + 1, \reg A/I^{n-j+1} + \reg B/J^j \}.$}
\end{theorem}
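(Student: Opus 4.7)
The plan is to filter $(I+J)^n$ by the ascending chain $Q_0 \subseteq Q_1 \subseteq \cdots \subseteq Q_n = (I+J)^n$ from Lemma~\ref{intersection}, where $Q_0 = I^n$ and $Q_i/Q_{i-1} \cong I^{n-i}J^i/I^{n-i+1}J^i$ for $i \ge 1$. The strategy is to bound the depth and regularity of each successive quotient via Lemma~\ref{L2}, transport the bounds to $Q_n$ by iterating the short exact sequences $0 \to Q_{i-1} \to Q_i \to Q_i/Q_{i-1} \to 0$ together with Lemmas~\ref{depth}(i) and~\ref{reg}(i), and finally shift by one via $0 \to (I+J)^n \to R \to R/(I+J)^n \to 0$.

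For $1 \le i \le n-1$, both $I^{n-i}$ and $I^{n-i+1}$ are nonzero proper, so the short exact sequence $0 \to I^{n-i+1}J^i \to I^{n-i}J^i \to Q_i/Q_{i-1} \to 0$ combined with Lemma~\ref{L2} and Lemmas~\ref{depth}(iii), \ref{reg}(iii) gives
$$\depth Q_i/Q_{i-1} \ge \min\bigl\{\depth A/I^{n-i+1} + \depth B/J^i + 1,\ \depth A/I^{n-i} + \depth B/J^i + 2\bigr\}$$
and the parallel $\max$-bound for $\reg Q_i/Q_{i-1}$. After the final shift by one, the first entry of this $\min$ matches the second summand of the statement at $j = i$, while the second entry matches the first summand at index $i$.

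The boundary pieces $Q_n/Q_{n-1}$ and $Q_0$ require separate treatment since $I^0 = A$ is not a proper ideal. For $Q_n/Q_{n-1} \cong J^n/IJ^n$ I would compute $\depth_R J^n$ and $\reg_R J^n$ using the standard extension-of-scalars formulas for a $B$-module pulled back to $R = A \otimes_k B$, then combine them with Lemma~\ref{L2} applied to $IJ^n$; invoking $\depth A/I < r$ and $\reg A/I \ge 0$ yields, after the shift, the $j = n$ entry of the second family in the statement. For $Q_0 = I^n$ the same extension-of-scalars formulas give $\depth Q_0 - 1 = \depth A/I^n + s$ and $\reg Q_0 - 1 = \reg A/I^n$, which are bounded above respectively by $\depth A/I^n + \depth B/J$ and by $\reg A/I^n + \reg B/J$, i.e.\ by the $j = 1$ entries of the second family. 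Assembling the estimates produces the theorem.

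The main work is bookkeeping: carefully matching each piece of the filtration to the correct summand in the $\min$ or $\max$ of the statement, and handling the two boundary pieces $Q_0$ and $Q_n/Q_{n-1}$ whose analysis lies outside the direct scope of the Hoa--Tam formula. No genuinely new ideas beyond Lemmas~\ref{intersection} and~\ref{L2} should be needed.
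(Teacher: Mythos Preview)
Your proposal is correct and follows essentially the same route as the paper. The only cosmetic difference is that the paper works throughout with the quotient rings $R/Q_i$ and the sequences $0 \to Q_i/Q_{i-1} \to R/I^{n-i+1}J^i \to R/I^{n-i}J^i \to 0$, whereas you work with the ideals $Q_i$ and the sequences $0 \to I^{n-i+1}J^i \to I^{n-i}J^i \to Q_i/Q_{i-1} \to 0$, shifting by one at the end; the two variants are related by Lemma~1.3 and lead to identical bounds.
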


\begin{proof}
Let $Q_i = \sum_{j=0}^i I^{n-j}J^j$, for $i = 0,\dots,n$. Note that $Q_n = (I+J)^n$.
Using the short exact sequence
$$0 \to Q_i/Q_{i-1} \to R/Q_{i-1} \to R/Q_i \to 0$$
we can deduce that
\begin{align}
\depth R/Q_n & \ge \min\{\depth R/Q_0, \depth Q_i/Q_{i-1}-1|\ i = 1,\dots,n\},\\
\reg R/Q_n & \le \max\{\reg R/Q_0, \reg Q_i/Q_{i-1}-1|\ i = 1,\dots,n\}.
\end{align}\par
Since $Q_0 = R/I^n$, we have
\begin{align*}
\depth R/Q_0 & = \depth A/I^n + s \ge \depth A/I^n + \depth B/J,\\
\reg R/Q_0 & = \reg A/I^n \le \reg A/I^n + \reg B/J.
\end{align*}
By Lemma \ref{intersection}, $Q_i/Q_{i-1} = I^{n-i}J^i/I^{n-i+1}J^i.$
Hence, we have the short exact sequence
$$0 \to  Q_i/Q_{i-1} \to R/I^{n-i+1}J^i \to R/I^{n-i}J^i \to 0.$$
It then follows that
\begin{align}
\depth Q_i/Q_{i-1} & \ge \min\{\depth R/I^{n-i}J^i+1, \depth R/I^{n-i+1}J^i\}, \label{eq.qii} \\
\reg Q_i/Q_{i-1} & \le \max\{\reg R/I^{n-i}J^i+1, \reg R/I^{n-i+1}J^i\}. \label{eq.qiii}
\end{align}

For $i = 1,\dots,n-1$, by applying Lemma \ref{L2} to compute the depth and regularity in the left hand side of (\ref{eq.qii}) and (\ref{eq.qiii}), we get
\begin{align*}
\depth Q_i/Q_{i-1}\! -\! 1 & \ge \min\{\depth A/I^{n-i}\! + \depth B/J^i + 1, \depth A/I^{n-i+1}\! + \depth B/J^i\},\\
\reg Q_i/Q_{i-1}\! -\! 1 & \le \max\{\reg A/I^{n-i} + \reg B/J^i + 1, \reg A/I^{n-i+1} + \reg B/J^i \}.
\end{align*}

For $i = n$, notice that
\begin{align*}
\depth R/J^n & = r+\depth B/J^n \ge \depth A/I + \depth B/J^n +1 = \depth R/IJ^n,\\
\reg R/J^n & = \reg B/J^n \le \reg A/I + \reg B/J^n + 1 = \reg R/IJ^n.
\end{align*}
Thus,
\begin{align*}
\depth Q_n/Q_{n-1}\! -\! 1 & \ge  \depth A/I + \depth B/J^n +1,\\
\reg Q_n/Q_{n-1}\! -\! 1 & \le  \reg A/I + \reg B/J^n + 1.
\end{align*} \par
The conclusion now follows by combining the above estimates for $\depth R/Q_0$, $\reg R/Q_0$, $\depth Q_i/Q_{i-1}-1$, and $\reg Q_i/Q_{i-1}-1$, for $i = 1,\dots,n$, together with (2.1) and (2.2).
\end{proof}

The bounds in Theorem \ref{first bound} are given by the minimum and maximum values of two terms.
The following propositions show that the values of both terms are attainable and, thus, both terms are essential in the statement of Theorem \ref{first bound}. For that we shall need the following result on tensor products of modules over a field.

\begin{lemma} \label{tensor}
Let $M$ and $N$ be graded module over $A$ and $B$, respectively. Then \par
{\rm (i)} $\depth M\otimes_k N  = \depth M + \depth N,$\par
{\rm (ii)} $\reg M\otimes_k N = \reg M + \reg N.$
\end{lemma}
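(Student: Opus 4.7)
The plan is to apply the K\"unneth-type formula of Goto and Watanabe for local cohomology of tensor products over a field. Let $\mm_A$, $\mm_B$ and $\MM = \mm_A R + \mm_B R$ be the maximal homogeneous ideals of $A$, $B$ and $R = A \otimes_k B$, respectively. The Goto-Watanabe formula gives a bigraded isomorphism
$$H^k_\MM(M \otimes_k N) \;\cong\; \bigoplus_{i+j=k} H^i_{\mm_A}(M) \otimes_k H^j_{\mm_B}(N),$$
which moreover respects the grading in the sense that
$$[H^i_{\mm_A}(M) \otimes_k H^j_{\mm_B}(N)]_t \;=\; \bigoplus_{t_1+t_2=t}[H^i_{\mm_A}(M)]_{t_1} \otimes_k [H^j_{\mm_B}(N)]_{t_2}.$$
This is the only nontrivial input; the rest is combinatorial bookkeeping.

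For part (i), note that $H^i_{\mm_A}(M) \otimes_k H^j_{\mm_B}(N)$ is nonzero if and only if both factors are nonzero, since we are tensoring over a field. Hence $H^k_\MM(M \otimes_k N) \ne 0$ if and only if there exist $i,j$ with $i+j = k$ and $H^i_{\mm_A}(M) \ne 0 \ne H^j_{\mm_B}(N)$. The smallest such $k$ is exactly $\depth M + \depth N$, so using the Grothendieck characterization of depth as the smallest $k$ with $H^k_\MM \ne 0$ yields (i).

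For part (ii), the compatibility of the grading gives $a^k(M \otimes_k N) = \max_{i+j = k}\{a^i(M) + a^j(N)\}$ (with the convention that $a^\ell = -\infty$ when the corresponding local cohomology vanishes, and that $(-\infty) + \text{anything} = -\infty$). Therefore
$$\reg(M \otimes_k N) = \max_k\{a^k(M \otimes_k N) + k\} = \max_{i,j}\{(a^i(M) + i) + (a^j(N) + j)\},$$
and this last maximum separates into $\max_i\{a^i(M)+i\} + \max_j\{a^j(N)+j\} = \reg M + \reg N$.

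The only potential obstacle is verifying that the Goto-Watanabe isomorphism is compatible with the internal grading (not just the homological degree), but this is built into their statement since both sides are naturally graded and the spectral sequence or \v{C}ech-complex argument producing the isomorphism preserves the grading. Once that is invoked as a black box, the statement reduces to the elementary fact that tensor products over a field of nonzero vector spaces are nonzero, together with the additivity of degrees under $\otimes_k$.
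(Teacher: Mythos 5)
Your proof is correct and follows exactly the same route as the paper: both invoke the Goto--Watanabe formula $H_\MM^k(M\otimes_k N) \cong \bigoplus_{i+j=k}H_\mm^i(M)\otimes_k H_\nn^j(N)$ together with the local cohomology characterizations of depth and regularity. You simply spell out the bookkeeping (nonvanishing of tensor products over a field, additivity of the $a^i$-invariants) that the paper leaves implicit.
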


\begin{proof}
Let $\MM$ denote the maximal ideal of $R$.
Note that
\begin{align*}
\depth M\otimes_k N & = \min\{k|\ H_\MM^k(M\otimes_k N) \neq 0\},\\
\reg M\otimes_k N = & \max\{n+k|\  H_\MM^k(M\otimes_k N)_n \neq 0, k \ge 0\}.
\end{align*}
The assertions follow from the following formula for the local cohomology modules of tensor products of Goto-Watanabe \cite[Theorem 2.2.5]{GW}:
$$H_\MM^k(M\otimes_k N) = \bigoplus_{i+j=k}H_\mm^i(M)\otimes_k H_\nn^j(N),$$
where $\mm$ and $\nn$ denote the maximal ideals of $A$ and $B$.
\end{proof}

\begin{proposition} \label{example 1}
Assume that $\depth A/I^2 \ge \depth A/I+1$. \par
{\rm (i)} If $\depth B/J^2 \ge \depth B/J+1$ then
$\depth R/(I+J)^2 = \depth A/I + \depth B/J + 1.$\par
{\rm (ii)} If $\depth B/J^2 < \depth B/J$ then
$\depth R/(I+J)^2 = \depth A/I + \depth B/J^2.$
\end{proposition}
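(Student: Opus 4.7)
The plan is to use the short exact sequence
\begin{equation*}
0 \to (A/I) \otimes_k (J/J^2) \to R/(I+J)^2 \to (A/I^2) \otimes_k (B/J) \to 0
\end{equation*}
and read off $\depth R/(I+J)^2$ via Lemma \ref{depth}(vii). To construct it, I would observe that $(I+J)^2 = I^2+IJ+J^2 \subseteq I^2+J$, giving a surjection $R/(I+J)^2 \twoheadrightarrow R/(I^2+J) \cong (A/I^2) \otimes_k (B/J)$ whose kernel equals $(I^2+J)/(I+J)^2 \cong J/\bigl(J \cap (I+J)^2\bigr)$. Using Lemma \ref{L1} to write $I^2 \cap J = I^2 J$, one checks $J \cap (I+J)^2 = IJ + J^2$, and under the identifications $J = A \otimes_k J$, $IJ = I \otimes_k J$, $J^2 = A \otimes_k J^2$ the kernel $J/(IJ+J^2)$ becomes $(A/I) \otimes_k (J/J^2)$.

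By Lemma \ref{tensor} the right term of the SES has depth $\depth A/I^2 + \depth B/J$, while the left term has depth $\depth A/I + \depth J/J^2$. Applying Lemma \ref{depth} to the auxiliary sequence $0 \to J^2 \to J \to J/J^2 \to 0$: in Case (i), $\depth J^2 > \depth J$, so part (vi) gives $\depth J/J^2 = \depth J = \depth B/J + 1$, whence the left term has depth $\depth A/I + \depth B/J + 1$; in Case (ii), $\depth J^2 < \depth J$, so part (v) gives $\depth J/J^2 = \depth J^2 - 1 = \depth B/J^2$, whence the left term has depth $\depth A/I + \depth B/J^2$.

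The final step is to apply Lemma \ref{depth}(vii) to the main SES, which requires the depth of the right term $(A/I^2) \otimes_k (B/J)$ to be at least the depth of the left term $(A/I) \otimes_k (J/J^2)$. In Case (i), the hypothesis $\depth A/I^2 \ge \depth A/I + 1$ yields $\depth A/I^2 + \depth B/J \ge \depth A/I + \depth B/J + 1$, so Lemma \ref{depth}(vii) gives $\depth R/(I+J)^2 = \depth A/I + \depth B/J + 1$. In Case (ii), combining $\depth A/I^2 \ge \depth A/I + 1$ with $\depth B/J^2 < \depth B/J$ yields the strict inequality $\depth A/I^2 + \depth B/J > \depth A/I + \depth B/J^2$, and the same lemma yields $\depth R/(I+J)^2 = \depth A/I + \depth B/J^2$. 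The trickiest step is the kernel identification in the first paragraph; after that, the argument is routine bookkeeping with Lemmas \ref{depth} and \ref{tensor}.
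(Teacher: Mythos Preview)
Your proof is correct and takes a genuinely different route from the paper's. The paper filters $(I+J)^2$ in two steps, using the chain $I^2\subset I^2+IJ\subset (I+J)^2$ and the two short exact sequences
\[
0\to IJ/I^2J\to R/I^2\to R/(I^2+IJ)\to 0,\qquad
0\to J^2/IJ^2\to R/(I^2+IJ)\to R/(I+J)^2\to 0,
\]
with $IJ/I^2J\cong (I/I^2)\otimes_k J$ and $J^2/IJ^2\cong (A/I)\otimes_k J^2$. It then computes $\depth I/I^2$ from the hypothesis on $I$, and in case~(i) invokes the extra observation $\depth B/J+1\le\dim B/J<s$ to force a strict inequality before chaining through both sequences.

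You instead collapse everything into the single sequence
\[
0\to (A/I)\otimes_k(J/J^2)\to R/(I+J)^2\to (A/I^2)\otimes_k(B/J)\to 0,
\]
coming from $(I+J)^2\subset I^2+J$. Your kernel computation $J\cap(I+J)^2=IJ+J^2$ via the modular law and Lemma~\ref{L1} is correct, as is the identification $J/(IJ+J^2)\cong (A/I)\otimes_k(J/J^2)$. The payoff is that a single application of Lemma~\ref{depth}(vii) finishes each case, and you never need the dimension bound $\depth B/J+1<s$: the non-strict inequality $\depth A/I^2+\depth B/J\ge \depth A/I+\depth B/J+1$ already suffices for (vii). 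So your argument is slightly cleaner, at the cost of a marginally more involved kernel identification up front.
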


\begin{proof}
From the proof of Theorem \ref{first bound}, we have the following short exact sequences
\begin{align}
& 0 \to IJ/I^2J  \to R/I^2 \to R/(I^2+IJ) \to 0,\\
& 0 \to J^2/IJ^2 \to R/(I^2+IJ) \to R/(I+J)^2 \to 0.
\end{align}
Note that $IJ = I \otimes_kJ$ for arbitrary ideals $I \subseteq A$ and $J \subseteq B$.
Thus, $IJ/I^2J \cong (I/I^2) \otimes_k J$ and $J^2/IJ^2 \cong (A/I) \otimes_k J^2$.
Therefore, it follows from Lemma \ref{tensor} that
\begin{align*}
\depth IJ/I^2J & = \depth I/I^2 + \depth J = \depth I/I^2 + \depth B/J +1,\\
\depth J^2/IJ^2 & = \depth A/I + \depth J^2 = \depth A/I + \depth B/J^2 +1.
\end{align*}

By considering the short exact sequence $0 \to I/I^2 \to A/I^2 \to A/I \to 0$, it follows from our hypotheses that
$\depth I/I^2 = \depth A/I+1$. Since  $\depth B/J+1 \le s$, we get
$$\depth R/I^2 = \depth A/I^2+s \ge \depth I/I^2 + \depth B/J +1 = \depth IJ/I^2J.$$
By (2.5), this implies that
$$\depth R/(I^2+IJ) \ge \depth IJ/I^2J - 1 = \depth A/I + \depth B/J+1.$$

(i) If $\depth B/J^2 \ge \depth B/J + 1$ then $\depth B/J + 1 \le \dim B/J < s$. Hence,
$\depth R/I^2 > \depth IJ/I^2J.$ We thus have
\begin{align*}
\depth R/(I^2+IJ) & = \depth IJ/I^2J - 1 = \depth A/I + \depth B/J+1\\
& \le \depth A/I + \depth B/J^2 = \depth J^2/IJ^2-1.
\end{align*}
By (2.6), this implies that
$$\depth R/(I+J)^2  = \depth R/(I^2+IJ) = \depth A/I + \depth B/J + 1.$$

(ii) If $\depth B/J^2 < \depth B/J$ then
\begin{align*}
\depth R/(I^2+IJ) & \ge \depth A/I + \depth B/J+1\\
& >  \depth A/I + \depth B/J^2+1 = \depth J^2/IJ^2.
\end{align*}
By (2.6), this implies that
$$\depth R/(I+J)^2 = \depth J^2/IJ^2 - 1 = \depth A/I + \depth B/J^2.$$
\end{proof}

To find ideals $I$ which satisfy the conditions of  Proposition \ref{example 1} we refer the reader to \cite[Theorem 4.1]{HH}, where for any bounded non-decreasing function $f\!: \NN \to \NN$, one constructs a monomial ideal $I$ such that $\depth A/I^n = f(n)$ for all $n \ge 1$.

\begin{proposition} \label{example 2}
Assume that $\reg A/I^2 \le \reg A/I+1$.  \par
{\rm (i)} If $\reg B/J^2 \le \reg B/I+1$ then
$\reg R/(I+J)^2 = \reg A/I+\reg B/J +1.$\par
{\rm (ii)} If $\reg B/J^2 > \reg B/I$ then
$\reg R/(I+J)^2 = \reg A/I+\reg B/J^2.$
\end{proposition}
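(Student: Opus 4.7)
The plan is to mirror the proof of Proposition \ref{example 1}, working with the same two short exact sequences
\begin{align*}
& 0 \to IJ/I^2J \to R/I^2 \to R/(I^2+IJ) \to 0, \\
& 0 \to J^2/IJ^2 \to R/(I^2+IJ) \to R/(I+J)^2 \to 0,
\end{align*}
but applying Lemma \ref{reg} in place of Lemma \ref{depth} throughout.

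First I would compute the regularities of the building blocks. Since $IJ/I^2J \cong (I/I^2) \otimes_k J$ and $J^2/IJ^2 \cong (A/I) \otimes_k J^2$, Lemma \ref{tensor}(ii) yields
$$\reg IJ/I^2J = \reg I/I^2 + \reg B/J + 1, \qquad \reg J^2/IJ^2 = \reg A/I + \reg B/J^2 + 1,$$
while $\reg R/I^2 = \reg A/I^2$ since $\reg B = 0$. Next I would establish $\reg I/I^2 = \reg A/I + 1$ under the hypothesis $\reg A/I^2 \le \reg A/I + 1$. Applying Lemma \ref{reg}(ii) to the sequence $0 \to I/I^2 \to A/I^2 \to A/I \to 0$ gives the upper bound $\reg I/I^2 \le \reg A/I + 1$, and the matching lower bound follows from Lemma \ref{reg}(i) when $\reg A/I^2 = \reg A/I + 1$ and from Lemma \ref{reg}(iv) when $\reg A/I^2 < \reg A/I$. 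This is the regularity analog of the identity $\depth I/I^2 = \depth A/I + 1$ used in the proof of Proposition \ref{example 1}.

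With this in hand, $\reg IJ/I^2J = \reg A/I + \reg B/J + 2$ strictly exceeds $\reg R/I^2 \le \reg A/I + 1$ (using $\reg B/J \ge 0$), so Lemma \ref{reg}(v) applied to the first sequence delivers
$$\reg R/(I^2+IJ) = \reg A/I + \reg B/J + 1.$$
For case (i), when $\reg B/J^2 = \reg B/J + 1$ one has $\reg J^2/IJ^2 = \reg A/I + \reg B/J + 2 > \reg R/(I^2+IJ)$, and Lemma \ref{reg}(v) applied to the second sequence yields $\reg R/(I+J)^2 = \reg A/I + \reg B/J + 1$; when $\reg B/J^2 \le \reg B/J$ one has $\reg J^2/IJ^2 \le \reg R/(I^2+IJ)$ and invokes Lemma \ref{reg}(vi) or (vii) to reach the same conclusion. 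For case (ii), the hypothesis $\reg B/J^2 > \reg B/J$ forces $\reg J^2/IJ^2 > \reg R/(I^2+IJ)$, so Lemma \ref{reg}(v) applied to the second sequence gives $\reg R/(I+J)^2 = \reg A/I + \reg B/J^2$.

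The main obstacle will be establishing the identity $\reg I/I^2 = \reg A/I + 1$: unlike Lemma \ref{depth}(iv), which directly produces the depth analog from the hypothesis, the corresponding parts of Lemma \ref{reg} do not cover every subcase of the hypothesis in a single stroke, and one may need to treat the boundary case $\reg A/I^2 = \reg A/I$ separately (noting that regularity is not monotone under the surjection $A/I^2 \twoheadrightarrow A/I$). Once this identity is secured, the remainder of the argument is a routine case analysis parallel to the depth version in Proposition \ref{example 1}.
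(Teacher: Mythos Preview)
Your proposal is correct and follows exactly the approach the paper intends: the paper's own proof is the single line ``The proof is similar to that of Proposition~\ref{example 1},'' and your write-up is precisely that regularity analog, with the boundary case $\reg A/I^2 = \reg A/I$ correctly flagged as the one place requiring extra care. One small correction: in case~(i) when $\reg B/J^2 \le \reg B/J$, Lemma~\ref{reg}(vii) does not help (it would compute $\reg N$, not $\reg P$), so only Lemma~\ref{reg}(vi) applies there, and the equality subcase $\reg B/J^2 = \reg B/J$ is then part of the same boundary issue you already identified.
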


\begin{proof}
The proof is similar to that of Proposition \ref{example 1}.
\end{proof}

It is easy to find ideals $I$ with $\reg A/I^2 > \reg A/I$.
However, to find ideals $I$ with $\reg A/I^2 \le \reg A/I +1$ is hard. 
We are thankful to A.~Conca, who communicated to us the following example.

\begin{example}
Let $A = k[x_1,x_2,x_3]$ and $I=(x_1^4,x_1^3x_2,x_1x_2^3, x_2^4,x_1^2x_2^2x_3^5)$.
Using Macaulay2, we get $\reg A/I = 8$ and $\reg A/I^2 = 7$.
\end{example}

If $J$ is generated by linear forms, we have the following formulae.
Note that in this case, $\depth B/J^n = n-1$ for $n\ge 1$.

\begin{proposition} \label{variable}
Assume that $J$ is generated by linear forms.
Then~\par
{\rm (i)} $\depth R/(I+J)^n = \min_{i \le n} \depth A/I^i + \dim B/J,$ \par
{\rm (ii)} $\reg R/(I+J)^n = \max_{i \le n}\{ \reg A/I^i-i\}+n.$
\end{proposition}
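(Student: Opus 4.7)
My plan is to reduce to the case where $J$ is the full maximal homogeneous ideal of a polynomial subring, and then decompose $R/(I+J)^n$ as a graded $A$-module according to the $y$-degree.

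First, after a linear change of coordinates in $B$, I may assume $J = (y_1,\dots,y_t)$ for some $t \le s$. Let $S = k[x_1,\dots,x_r,y_1,\dots,y_t]$. The ideal $(I+J)^n$ lies in $S$ and the extra variables $y_{t+1},\dots,y_s$ form a regular sequence on $R/(I+J)^n$, so
\[
\depth_R R/(I+J)^n = \depth_S S/(I+J)^n + (s-t), \qquad \reg_R R/(I+J)^n = \reg_S S/(I+J)^n,
\]
with $s-t = \dim B/J$. This reduces the statement to the case in which $J$ is the maximal homogeneous ideal of $B' := k[y_1,\dots,y_t]$.

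Next, write $S = A\otimes_k B'$ and grade it by $y$-degree. Using $I^k J^{n-k} = I^k \otimes_k J^{n-k}$ inside $A \otimes_k B'$, together with $J^m \cap B'_d = B'_d$ when $d \ge m$ and $0$ otherwise, and $\sum_{k \ge m} I^k = I^m$, a direct calculation gives
\[
(I+J)^n \cap (A \otimes_k B'_d) \;=\; I^{\max(0,\,n-d)} \otimes_k B'_d.
\]
Since $A$ acts within each $y$-degree, this produces a decomposition of graded $A$-modules
\[
S/(I+J)^n \;\cong\; \bigoplus_{d=0}^{n-1} (A/I^{n-d}) \otimes_k B'_d.
\]

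Because $S/(I+J)^n$ is annihilated by $J^n$, the localizations at each $y_i$ vanish, so in the \v Cech complex computing $H^\bullet_\MM(S/(I+J)^n)$ the $y$-factor collapses to a complex concentrated in degree $0$, leaving only the \v Cech complex of the $x_i$'s. Hence $H^i_\MM(S/(I+J)^n) = H^i_\mm(S/(I+J)^n)$ for every $i$, giving $\depth_S S/(I+J)^n = \depth_A S/(I+J)^n$ and $\reg_S S/(I+J)^n = \reg_A S/(I+J)^n$. Each summand $(A/I^{n-d}) \otimes_k B'_d$ is a sum of copies of $A/I^{n-d}$ shifted by $d$; depth is unchanged by shifts and direct sums of copies, while regularity increases by $d$. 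Taking the minimum and maximum over $d \in \{0,\dots,n-1\}$ and reindexing $k = n-d$ yields the required formulae. The main subtlety is the intersection formula $(I+J)^n \cap (A \otimes_k B'_d) = I^{\max(0,n-d)} \otimes_k B'_d$; once this is in hand, everything else is bookkeeping with depth, regularity, and grading shifts.
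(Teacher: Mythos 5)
Your proof is correct, and it shares its skeleton with the paper's: the reduction to $J=(y_1,\dots,y_t)$ and the splitting off of the unused variables $y_{t+1},\dots,y_s$ are identical, and your $y$-degree decomposition is the structure the paper exploits. The difference is in execution. The paper reduces to $t=s$, proves the one-variable case via $R/(I,y)^n=\bigoplus_{i\le n}(A/I^i)y^{n-i}$, and then inducts on the number of variables, absorbing $y_1,\dots,y_{s-1}$ into $A$ one at a time; you instead establish the full multivariable decomposition $S/(I+J)^n\cong \bigoplus_{d=0}^{n-1}(A/I^{n-d})\otimes_k B'_d$ in one stroke, which eliminates the induction. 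Your intersection formula is right: in $y$-degree $d$ the ideal $(I+J)^n=\sum_k I^kJ^{n-k}$ contributes $\sum_{k\ge n-d}I^k=I^{\max(0,n-d)}$. You also make explicit a point the paper leaves tacit in its base case: the decomposition is one of graded $A$-modules, not $S$-modules (multiplication by the $y_i$ moves between summands), so one must justify computing depth and regularity over $A$ rather than $S$; your observation that $M_{y_i}=0$ because $J^nM=0$, hence $H^i_\MM(M)=H^i_\mm(M)$ by collapsing the $y$-part of the \v{C}ech complex, supplies exactly that justification, which the paper's induction silently relies on at each stage. Net effect: the two arguments prove the same decomposition, but yours is non-inductive and more self-contained, at the cost of invoking the local cohomology machinery explicitly.
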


\begin{proof}
Without restriction we may assume that $J = (y_1,...,y_t)$, $t \le s$. Then $\dim B/J = s-t$.
Set $B' = k[y_1,...,y_t]$, $J' = (y_1,...,y_t)B'$, and $R' = k[x_1,...,x_r,y_1,...,y_t]$. Then
\begin{align*} 
\depth R/(I+J)^n & =  \depth R'/(I+J')^n+ s-t,\\
 \reg R/(I+J)^n & = \reg R'/(I+J')^n.
\end{align*}
Therefore, we only need to prove the case $t=s$.  \par
If $t=s=1$, we set $y=y_1$. Then $B = k[y]$ and $J = (y)$.
Write $R = \oplus_{i \ge 0} Ay^i$ and
$$(I,y)^n = I^n \oplus I^{n-1}y \oplus \cdots \oplus Ay^n \oplus Ay^{n+1} \oplus \cdots.$$
Then $R/(I,y)^n = \oplus_{i \le n} (A/I^i)y^{n-i}$. From this it follows that  
\begin{align*} 
\depth R/(I,y)^n & =  \min_{i\le n}\depth A/I^i ,\\
 \reg R/(I,y)^n & = \max_{i \le n}\{\reg A/I^i +n-i\}.
\end{align*}\par
If $t = s > 1$, we set $A' = k[x_1,...,x_r,y_1,...,y_{s-1}]$ and $I' = (I,y_1,...,y_{s-1})A'$.
Using induction we may assume that 
\begin{align*} 
\depth A'/(I')^n & =  \min_{i\le n}\depth A/I^i ,\\
 \reg A'/(I')^n & = \max_{i \leq n}\{\reg A/I^i-i\}+n.
\end{align*}
Note that $I+J = (I',y_s)$. Then we have
\begin{align*} 
\depth R/(I+J)^n & =  \min_{i\le n}\depth A'/(I')^i  = \min_{i\le n} \depth A/I^i,\\
 \reg R/(I+J)^n & = \max_{i \leq n}\{\reg A'/(I')^i -i\}+n = \max_{i \le n}\{ \reg A/I^i-i\}+n.
\end{align*} 
\end{proof}

Propositions \ref{example 1}, \ref{example 2} and \ref{variable} show that there is no general formulae for $\depth R/(I+J)^n$ and $\reg R/(I+J)^n$, and that both terms in the statement of Theorem \ref{first bound} are essential.


\section{Depth and regularity of $(I+J)^n/(I+J)^{n+1}$}

In this section, we shall see that the depth and regularity of $(I+J)^n/(I+J)^{n+1}$ can be nicely related to those associated to $I$ and $J$. In particular, we shall provide bounds for the depth and regularity of $R/(I+J)^n$ in terms of those of $I^i/I^{i+1}$ and $J^j/J^{j+1}$. \par

We start by making the following observation, which is essential for our results in this section.

\begin{lemma} \label{summand}
$I^i/I^{i+1} \otimes_k J^j/J^{j+1} \cong I^iJ^j /(I^{i+1}J^j + I^{i+1}J^j).$
\end{lemma}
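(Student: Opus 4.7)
The plan is to exploit the identification $R = A \otimes_k B$ arising from the fact that $A$ and $B$ are polynomial rings over $k$ in disjoint sets of variables. The key preliminary observation is that for any $k$-subspaces $M \subseteq A$ and $N \subseteq B$, the multiplication map in $R$ induces a $k$-linear isomorphism $M \otimes_k N \xrightarrow{\sim} MN$, where the right-hand side is computed inside $R$. This is immediate from the monomial basis of $R$: a monomial in $R$ factors uniquely as a product of a monomial in the $x_i$'s and a monomial in the $y_j$'s. When $M$ and $N$ are in addition an $A$-submodule and a $B$-submodule respectively, this isomorphism is $R$-linear.

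Applying this to the three relevant pairs, I obtain $R$-module isomorphisms
\begin{align*}
I^i \otimes_k J^j &\cong I^iJ^j, \\
I^{i+1} \otimes_k J^j &\cong I^{i+1}J^j, \\
I^i \otimes_k J^{j+1} &\cong I^iJ^{j+1},
\end{align*}
with the latter two sitting inside the first as $R$-submodules. Because $k$ is a field, $-\otimes_k-$ is exact in both arguments, so the standard formula for the tensor product of quotient modules gives
$$I^i/I^{i+1} \otimes_k J^j/J^{j+1} \cong \frac{I^i \otimes_k J^j}{(I^{i+1}\otimes_k J^j) + (I^i \otimes_k J^{j+1})}.$$
Substituting the three identifications above then yields $I^iJ^j/(I^{i+1}J^j + I^iJ^{j+1})$, which is the desired conclusion (the statement contains an evident typo, repeating $I^{i+1}J^j$ twice on the right).

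I do not expect any serious obstacle: the proof is essentially bookkeeping, and the only point that requires a moment of care is verifying that the isomorphism $M\otimes_k N \cong MN$ is $R$-linear, not merely $k$-linear. This follows because the $R$-action on $M\otimes_k N$ factors through $R = A\otimes_k B$ acting componentwise, which is exactly how $R$ acts on the product $MN$ inside $R$. Consequently the whole chain of isomorphisms is in the category of $R$-modules.
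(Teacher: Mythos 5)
Your proof is correct, and it takes a genuinely different (and cleaner) route than the paper's. The paper writes $I^i/I^{i+1}\otimes_k J^j/J^{j+1}$ as a two-stage quotient $(I^iJ^j/I^iJ^{j+1})\big/(I^{i+1}J^j/I^{i+1}J^{j+1})$, and then has to identify the image of the second module inside the first; for this it invokes Lemma \ref{L1} (the Hoa--Tam identity $IJ=I\cap J$ for ideals in disjoint variables) together with the second isomorphism theorem to rewrite $I^{i+1}J^j/I^{i+1}J^{j+1}$ as $(I^{i+1}J^j+I^iJ^{j+1})/I^iJ^{j+1}$ --- the point being that $I^{i+1}J^j\cap I^iJ^{j+1}=I^{i+1}J^{j+1}$, which is exactly where $IJ=I\cap J$ enters. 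You instead apply the one-step formula $(M/M')\otimes_k(N/N')\cong (M\otimes_k N)/(M'\otimes_k N+M\otimes_k N')$, valid because $k$ is a field, and transport everything along the multiplication isomorphism $M\otimes_k N\cong MN$; this makes the intersection lemma unnecessary, since the corresponding statement $(I^{i+1}\otimes_k J^j)\cap(I^i\otimes_k J^{j+1})=I^{i+1}\otimes_k J^{j+1}$ inside $I^i\otimes_k J^j$ is automatic in the tensor picture. Your care about $R$-linearity of $M\otimes_k N\to MN$ (injectivity from flatness over $k$ plus the monomial decomposition $R=A\otimes_k B$, $R$-linearity from the componentwise action) is precisely the identification the paper uses implicitly in the first display of its proof, so the two arguments rest on the same foundation; yours simply reorganizes the bookkeeping to avoid Lemma \ref{L1}, whereas the paper's version showcases that identity, which it reuses elsewhere (e.g., in Proposition \ref{decomposition}). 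You are also right about the typo: the right-hand side of Lemma \ref{summand}, both in the statement and in the final line of the paper's proof, repeats $I^{i+1}J^j$ and should read $I^{i+1}J^j+I^iJ^{j+1}$, as the paper's own intermediate computations confirm.
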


\begin{proof}
It is clear that
\begin{align*}
I^i/I^{i+1} \otimes_k J^j/J^{j+1} &
\cong (I^i \otimes_k J^j/ I^i \otimes_kJ^{j+1})/(I^{i+1} \otimes_k J^j/I^{i+1}\otimes_kJ^{j+1}).\\
& \cong (I^iJ^j/ I^iJ^{j+1})/(I^{i+1}J^j/I^{i+1}J^{j+1}).
\end{align*}
Using Lemma \ref{L1}, we have
\begin{align*}
I^{i+1}J^j/I^{i+1}J^{j+1} & = I^{i+1}\cap J^j/I^{i+1} \cap J^{j+1} =  I^{i+1}\cap J^j/(I^i \cap J^{j+1}) \cap (I^{i+1}\cap J^i)\\
& \cong I^{i+1}\cap J^j + I^i \cap J^{j+1}/I^i \cap J^{j+1} = I^{i+1}J^j + I^iJ^{j+1}/I^iJ^{j+1}.
\end{align*}
Therefore,
\begin{align*}
I^i/I^{i+1} \otimes_k J^j/J^{j+1} & \cong (I^iJ^j/ I^iJ^{j+1})/(I^{i+1}J^j + I^iJ^{j+1}/I^iJ^{j+1})\\
& \cong I^iJ^j /(I^{i+1}J^j + I^{i+1}J^j).
\end{align*}
\end{proof}

\begin{proposition} \label{decomposition}
$\displaystyle (I+J)^n/(I+J)^{n+1}
= \bigoplus_{i+j=n}\big(I^i/I^{i+1} \otimes_k J^j/J^{j+1}\big).$
\end{proposition}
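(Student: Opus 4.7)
The plan is to exhibit the decomposition directly via the $k$-tensor structure $R = A \otimes_k B$. The crucial identification, coming from Lemma~\ref{L1}, is that $I^p J^q = I^p \otimes_k J^q$ viewed as $k$-subspaces of $R$. With this in hand, I would define the natural $R$-linear map
$$\phi \colon \bigoplus_{i+j=n} (I^i/I^{i+1}) \otimes_k (J^j/J^{j+1}) \longrightarrow (I+J)^n/(I+J)^{n+1}$$
on each summand by $\overline f \otimes \overline g \mapsto \overline{fg}$. It is well-defined because for $f \in I^{i+1}$ or $g \in J^{j+1}$ we have $fg \in I^{i+1}J^j + I^iJ^{j+1} \subseteq (I+J)^{n+1}$, and surjective because $(I+J)^n = \sum_{i+j=n} I^iJ^j$.

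For injectivity, I would choose $k$-subspace complements $V_i \subseteq I^i$ with $I^i = V_i \oplus I^{i+1}$ (so $V_i \cong I^i/I^{i+1}$) and analogously $W_j \subseteq J^j$ with $J^j = W_j \oplus J^{j+1}$. Iterating these choices up to level $n$ gives
$$A = V_0 \oplus \cdots \oplus V_n \oplus I^{n+1}, \qquad B = W_0 \oplus \cdots \oplus W_n \oplus J^{n+1},$$
and $I^p = V_p \oplus \cdots \oplus V_n \oplus I^{n+1}$ for $0 \le p \le n+1$, with the analogous formula for $J^q$. Tensoring these decompositions over $k$ yields a direct-sum decomposition of $R$, and in it each $I^p \otimes_k J^q$ becomes the direct sum of the $V_a \otimes_k W_b$ with $a \ge p$ and $b \ge q$, together with a common tail $T$ consisting of the pieces $V_a \otimes_k J^{n+1}$, $I^{n+1} \otimes_k W_b$, and $I^{n+1} \otimes_k J^{n+1}$.

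Summing over $p+q = n$ and $p+q = n+1$ respectively, a direct check gives
$$(I+J)^n = \bigg(\bigoplus_{\substack{0 \le a,b \le n \\ a+b \ge n}} V_a \otimes_k W_b\bigg) \oplus T, \qquad (I+J)^{n+1} = \bigg(\bigoplus_{\substack{0 \le a,b \le n \\ a+b \ge n+1}} V_a \otimes_k W_b\bigg) \oplus T.$$
Consequently the quotient equals $\bigoplus_{a+b=n} V_a \otimes_k W_b \cong \bigoplus_{i+j=n} (I^i/I^{i+1}) \otimes_k (J^j/J^{j+1})$, and tracing through the identifications shows this isomorphism is exactly $\phi$; in particular $\phi$ is injective.

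The main obstacle I expect is the bookkeeping in the last step: verifying that a tensor $V_a \otimes_k W_b$ with $0 \le a,b \le n$ lies in $\sum_{p+q=m} I^p \otimes_k J^q$ if and only if $a+b \ge m$, and that every term in the tail $T$ is already contained in $(I+J)^n$. This is a finite combinatorial check requiring no input beyond Lemma~\ref{L1}, so the proof ultimately reduces to organized linear algebra over $k$.
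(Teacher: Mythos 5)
Your proof is correct, but it follows a genuinely different route from the paper's. The paper argues entirely by ideal arithmetic: starting from $(I+J)^n=\sum_{i+j=n}I^iJ^j$, it uses Lemma~\ref{L1} (that $IJ=I\cap J$) to show that distinct summands pairwise intersect inside $(I+J)^{n+1}$, so their images in the quotient form a direct sum; it then computes $(I+J)^{n+1}\cap I^iJ^j=I^{i+1}J^j+I^iJ^{j+1}$ and identifies each summand with $I^i/I^{i+1}\otimes_k J^j/J^{j+1}$ via a separate lemma (Lemma~\ref{summand}). You instead exhibit the canonical multiplication map $\phi$ and prove bijectivity by choosing $k$-linear complements $V_a,W_b$, which yields a single decomposition of $R$ adapted simultaneously to \emph{all} powers $(I+J)^m$; the intersection statements the paper extracts from Lemma~\ref{L1} then fall out of the combinatorics of the index sets $\{a+b\ge m\}$. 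Your route buys an explicit, visibly $R$-linear (and degree-preserving, if the complements are chosen graded -- or simply because $\phi$ is canonical) isomorphism, and it is more self-contained than you suggest: the identification $I^pJ^q=I^p\otimes_k J^q$ does not come from Lemma~\ref{L1} but from exactness of $-\otimes_k-$ over a field, and in fact it \emph{implies} Lemma~\ref{L1}, so your argument silently reproves the paper's key lemmas rather than using them. What the paper's route buys is the reusable ideal-theoretic identity $(I+J)^{n+1}\cap I^iJ^j=I^{i+1}J^j+I^iJ^{j+1}$, which your linear algebra absorbs without stating. One small imprecision to tidy: $T$ is not literally a common summand of each individual $I^p\otimes_k J^q$ (for instance $I^{n+1}\otimes_k W_b\subseteq I^p\otimes_k J^q$ only when $b\ge q$, and $V_a\otimes_k J^{n+1}\subseteq I^p\otimes_k J^q$ only when $a\ge p$); what is true, and what your displayed formulas correctly assert, is that after summing over $p+q=m$ with $m\le n+1$ every tail piece is captured, since each lies in $A\otimes_k J^{n+1}=I^0J^{n+1}$ or in $I^{n+1}\otimes_k B=I^{n+1}J^0$. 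With that reading, the final combinatorial check ($V_a\otimes_k W_b\subseteq\sum_{p+q=m}I^p\otimes_k J^q$ iff $a+b\ge m$) goes through exactly as you predict, and $\phi$ is an isomorphism.
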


\begin{proof}
Note that $(I+J)^n = \sum_{i+j =n}I^iJ^j$.
Let $(i,j)$ and $(h,t)$ be two different pairs of non-negative integers such that $i+j = h+t = n$.
By Lemma \ref{L1}, we have
\begin{align*}
I^iJ^j \cap I^hJ^t & = (I^i \cap J^j) \cap (I^h \cap J^t) = I^{\max\{i,h\}} \cap J^{\max\{j,t\}}\\
& = J^{\max\{i,h\}}J^{\max\{j,t\}} \subseteq (I+J)^{n+1}.
\end{align*}
It then follows that
\begin{align}
(I+J)^n/(I+J)^{n+1} &= \bigoplus_{i+j=n} \big(I^iJ^j + (I+J)^{n+1}/(I+J)^{n+1}\big) \nonumber\\
& \cong  \bigoplus_{i+j=n} \big(I^iJ^j / (I+J)^{n+1}\cap I^iJ^j\big). \label{eq.decomp}
\end{align}
\par
It is easy to verify that $(I+J)^{n+1} \cap  I^i J^j \supseteq I^{i+1}J^j + I^{i+1}J^j.$
On the other hand, it can be seen that
$(I+J)^{n+1} \subseteq  I^{i+1} + J^{j+1}$. Hence,
\begin{align*}
(I+J)^{n+1} \cap I^iJ^j & \subseteq (I^{i+1} + J^{j+1}) \cap I^i \cap J^j \subseteq \big(I^{i+1}+ I^i \cap J^{j+1}\big) \cap J^j\\
& =  I^{i+1} \cap J^j + I^{i+1} \cap J^j
= I^{i+1}J^j + I^{i+1}J^j,
\end{align*}
where the last equality follows by applying Lemma \ref{L1}. Therefore,
$$(I+J)^{n+1} \cap  I^iJ^j = I^{i+1}J^j + I^{i+1}J^j.$$
By Lemma \ref{summand}, $I^iJ^j /\big(I^{i+1}J^j + I^{i+1}J^j\big) \cong I^i/I^{i+1} \otimes_k J^j/J^{j+1}.$
Thus,
$$I^iJ^j / (I+J)^{n+1}\cap I^iJ^j \cong I^i/I^{i+1} \otimes_k J^j/J^{j+1}.$$
The conclusion now follows from (\ref{eq.decomp}).
\end{proof}

The decomposition of $(I+J)^n/(I+J)^{n+1}$ in Proposition \ref{decomposition} yields the following formulae for
its depth and the regularity.

\begin{theorem} \label{equality}
For all  $n \ge 1$, we have\smallskip\par
{\rm (i)} $\displaystyle \depth (I+J)^n/(I+J)^{n+1} = \min_{i+j=n}\{\depth I^i/I^{i+1} + \depth J^j/J^{j+1}\},$\par
{\rm (ii)} $\displaystyle \reg (I+J)^n/(I+J)^{n+1} = \max_{i+j=n}\{\reg  I^i/I^{i+1} + \reg J^j/J^{j+1}\}.$
\end{theorem}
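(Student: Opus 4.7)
The plan is to deduce Theorem \ref{equality} directly by combining the decomposition of $(I+J)^n/(I+J)^{n+1}$ established in Proposition \ref{decomposition} with the tensor product formulae of Lemma \ref{tensor}; the only additional ingredient needed is the behavior of depth and regularity under finite direct sums.

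First, I would invoke Proposition \ref{decomposition} to rewrite
$$(I+J)^n/(I+J)^{n+1} \cong \bigoplus_{i+j=n}\bigl(I^i/I^{i+1} \otimes_k J^j/J^{j+1}\bigr)$$
as a finite direct sum of graded $R$-modules. Since local cohomology commutes with finite direct sums, for any graded $R$-module of the form $M_1 \oplus \cdots \oplus M_t$ we have $H^k_{\MM}(\oplus M_\ell) = \oplus_\ell H^k_{\MM}(M_\ell)$. Using the local-cohomological characterizations of depth and regularity recorded in Section 1, this immediately gives
$$\depth \bigoplus_\ell M_\ell = \min_\ell \depth M_\ell, \qquad \reg \bigoplus_\ell M_\ell = \max_\ell \reg M_\ell.$$

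Applying these identities to the decomposition above yields
$$\depth (I+J)^n/(I+J)^{n+1} = \min_{i+j=n}\depth\bigl(I^i/I^{i+1} \otimes_k J^j/J^{j+1}\bigr),$$
and the analogous identity with $\reg$ and $\max$ in place of $\depth$ and $\min$. At this point I would apply Lemma \ref{tensor} to each summand to obtain
$$\depth\bigl(I^i/I^{i+1} \otimes_k J^j/J^{j+1}\bigr) = \depth I^i/I^{i+1} + \depth J^j/J^{j+1},$$
and the same equality with $\reg$. Substituting back produces exactly the two formulae in the statement.

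There is no genuine obstacle here; the argument is an assembly of results already in hand. The only point worth a brief verification is that the isomorphism in Proposition \ref{decomposition} is a grading-preserving isomorphism of $R$-modules (so that both depth and regularity transport through it), and that Lemma \ref{tensor}, although stated for graded modules over $A$ and $B$, applies to $I^i/I^{i+1}$ and $J^j/J^{j+1}$; both points are immediate from the constructions in Section 2 and the Goto--Watanabe formula used to prove Lemma \ref{tensor}.
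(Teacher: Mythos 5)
Your proposal is correct and follows the same route as the paper's own proof: Proposition \ref{decomposition} to decompose $(I+J)^n/(I+J)^{n+1}$, the fact that depth and regularity of a finite direct sum are the minimum and maximum of those of the summands, and Lemma \ref{tensor} applied to each tensor factor. The paper leaves the direct-sum step implicit, whereas you justify it via local cohomology commuting with finite direct sums; otherwise the arguments coincide.
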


\begin{proof}
By Proposition \ref{decomposition}, we have
\begin{align*}
\depth (I+J)^n/(I+J)^{n+1} & = \min_{i+j=n} \depth (I^i/I^{i+1}\otimes_k J^j/J^{j+1}),\\
\reg (I+J)^n/(I+J)^{n+1} & = \max_{i+j=n} \reg (I^i/I^{i+1}\otimes_k J^j/J^{j+1}).
\end{align*}
Hence, the assertions follow by applying Lemma \ref{tensor}.
\end{proof}

Now, we can relate the depth and regularity of $R/(I+J)^n$ to those of successive quotients associated to $I$ and $J$.

\begin{corollary} \label{second bound}
For all  $n \ge 1$, we have\smallskip\par
{\rm (i)} $\displaystyle \depth R/(I+J)^n \ge  \min_{i+j \le n-1}\{\depth I^i/I^{i+1} + \depth J^j/J^{j+1}\},$\par
{\rm (ii)} $\displaystyle \reg R/(I+J)^n \le \max_{i+j\le n-1}\{\reg  I^i/I^{i+1} + \reg J^j/J^{j+1}\}.$
\end{corollary}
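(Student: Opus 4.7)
The plan is to prove both inequalities simultaneously by induction on $n$, using the short exact sequences
\[
0 \to (I+J)^{m}/(I+J)^{m+1} \to R/(I+J)^{m+1} \to R/(I+J)^{m} \to 0
\]
together with Theorem \ref{equality} to control the leftmost term. The right-hand side in (i) (resp.\ (ii)) is a minimum (resp.\ maximum) over $i+j \le n-1$, which decomposes naturally as the portion $i+j = n-1$ (handled by Theorem \ref{equality} at the final step) plus the portion $i+j \le n-2$ (furnished by the inductive hypothesis). This is exactly what parts (i) of Lemmas \ref{depth} and \ref{reg} will combine.

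For the base case $n=1$, I would observe that $R/(I+J) \cong A/I \otimes_{k} B/J$ and apply Lemma \ref{tensor}, yielding
\[
\depth R/(I+J) = \depth A/I + \depth B/J, \qquad \reg R/(I+J) = \reg A/I + \reg B/J,
\]
which matches the single term $(i,j)=(0,0)$ on the right-hand side, since $I^{0}/I^{1} = A/I$ and $J^{0}/J^{1} = B/J$. Thus both inequalities hold (with equality) at $n=1$.

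For the inductive step, suppose the bounds hold for some $n \ge 1$. Apply Lemma \ref{depth}(i) and Lemma \ref{reg}(i) to the short exact sequence above with $m=n$ to obtain
\[
\depth R/(I+J)^{n+1} \ge \min\bigl\{\depth (I+J)^{n}/(I+J)^{n+1},\ \depth R/(I+J)^{n}\bigr\},
\]
\[
\reg R/(I+J)^{n+1} \le \max\bigl\{\reg (I+J)^{n}/(I+J)^{n+1},\ \reg R/(I+J)^{n}\bigr\}.
\]
By Theorem \ref{equality}, the first entry in each brace equals $\min_{i+j=n}\{\depth I^{i}/I^{i+1} + \depth J^{j}/J^{j+1}\}$, respectively $\max_{i+j=n}\{\reg I^{i}/I^{i+1} + \reg J^{j}/J^{j+1}\}$. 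By the induction hypothesis applied to $R/(I+J)^{n}$, the second entry is bounded below (resp.\ above) by the corresponding extremum over $i+j \le n-1$. Merging these two index ranges yields the desired extremum over $i+j \le n$, completing the induction for $n+1$.

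There is no real obstacle here: the argument is a routine combination of the exact-sequence lemmas with the key formulae of Theorem \ref{equality}. The only point requiring mild care is bookkeeping of the index ranges $i+j = n$ versus $i+j \le n-1$ at each step, and remembering the convention $I^{0}=A$, $J^{0}=B$ so that the $n=1$ base case is covered by the single term $(0,0)$.
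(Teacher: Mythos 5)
Your proof is correct and is essentially the paper's argument: the paper likewise chains the short exact sequences $0 \to (I+J)^t/(I+J)^{t+1} \to R/(I+J)^{t+1} \to R/(I+J)^t \to 0$ for $t \le n-1$ to get $\depth R/(I+J)^n \ge \min_{t\le n-1} \depth (I+J)^t/(I+J)^{t+1}$ (and the analogous regularity bound) and then invokes Theorem \ref{equality}; your induction merely unrolls that chaining step by step. Your explicit base case via $R/(I+J) \cong A/I \otimes_k B/J$ and Lemma \ref{tensor} is a sound way to cover the $t=0$ term, which the paper absorbs implicitly into its appeal to Theorem \ref{equality}.
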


\begin{proof}
Using the short exact sequences
$$0 \to (I+J)^t/(I+J)^{t+1} \to R/(I+J)^{t+1} \to R/(I+J)^t \to 0$$
for $t < n$, we deduce that
\begin{align*}
\depth R/(I+J)^n & \ge  \min_{t \le n-1} \depth (I+J)^t/(I+J)^{t+1},\\
\reg R/(I+J)^n & \le \max_{t \le n-1} \reg (I+J)^t/(I+J)^{t+1}.
\end{align*}
Hence, the assertions follow from Theorem \ref{equality}.
\end{proof}

\begin{remark}
For an arbitrary ideal $I \subseteq A$, by considering the short exact sequence
$$0 \longrightarrow I^i/I^{i+1} \longrightarrow A/I^{i+1} \longrightarrow A/I^i \longrightarrow 0,$$
it can be seen that
\begin{align*}
\depth I^i/I^{i+1} & \ge \min\{\depth A/I^i+1, \depth A/I^{i+1}\},\\
\reg I^i/I^{i+1} & \le \max\{\reg A/I^i+1, \reg A/I^{i+1}\}.
\end{align*}
Thus, from Corollary \ref{second bound}, one can derive bounds for $\depth R/(I+J)^n$ and $\reg R/(I+J)^n$ in terms of $\depth A/I^i, \depth B/J^j$, $\reg A/I^i$, and $\reg B/J^j$. However, these bounds are worse than what was given in Theorem \ref{first bound}, since they involve all indices $i, j$ with $i+j \le n$.
\end{remark}

It often happens that  $\depth I^{i-1}/I^i  \ge \depth I^i/I^{i+1}$ and, if $I$ is generated by forms of degree $\ge 2$,
$\reg I^{i-1}/I^i +2 \le \reg I^i /I^{i+1}$. In these situations, we have the following formulae for $\depth R/(I+J)^n$ and $\reg R/(I+J)^n$.

\begin{corollary} ~\par
{\rm (i)} If  $\depth I^{i-1}/I^i  \ge \depth I^i/I^{i+1}$ for $i \le n-1$, then
$$\depth R/(I+J)^n = \min_{i+j = n-1}\{\depth I^i /I^{i+1}+ \depth J^j/J^{i+1}\}.$$\par
{\rm (ii)} If $\reg I^{i-1}/I^i +2 \le \reg I^i /I^{i+1}$ for $i \le n-1$, then
$$\reg R/(I+J)^n = \max_{i+j = n-1}\{\reg I^i /I^{i+1} + \reg J^j/J^{i+1}\}.$$
\end{corollary}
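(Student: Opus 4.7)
The plan is to prove both parts by induction on $n$, using Theorem \ref{equality} to identify the depth (respectively, the regularity) of the successive quotient $(I+J)^{n-1}/(I+J)^n$ with the right-hand side of each formula, and then exploiting the short exact sequence
\[
0 \longrightarrow (I+J)^{n-1}/(I+J)^n \longrightarrow R/(I+J)^n \longrightarrow R/(I+J)^{n-1} \longrightarrow 0
\]
together with part (vii) of Lemma \ref{depth} (respectively, Lemma \ref{reg}) to collapse the sequence and obtain $\depth R/(I+J)^n = \depth (I+J)^{n-1}/(I+J)^n$ (respectively, the regularity analogue).

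The base case $n = 1$ is hypothesis-free: one has $R/(I+J) \cong (A/I) \otimes_k (B/J)$, so Lemma \ref{tensor} yields $\depth R/(I+J) = \depth A/I + \depth B/J$ and $\reg R/(I+J) = \reg A/I + \reg B/J$, which coincide with the only summand (at $i=j=0$) on the right-hand sides.

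For the inductive step of (i), the key observation is that the hypothesis $\depth I^{i-1}/I^i \ge \depth I^i/I^{i+1}$ gives a monotonicity comparison between consecutive index sums: for any $(i,j)$ with $i+j = n-2$, the pair $(i+1,j)$ has index sum $n-1$, and the hypothesis (valid since $1 \le i+1 \le n-1$) yields $\depth I^{i+1}/I^{i+2} + \depth J^j/J^{j+1} \le \depth I^i/I^{i+1} + \depth J^j/J^{j+1}$. Hence $\min_{i+j=n-1} \le \min_{i+j=n-2}$. Combining this with the induction hypothesis $\depth R/(I+J)^{n-1} = \min_{i+j=n-2}\{\depth I^i/I^{i+1} + \depth J^j/J^{j+1}\}$ and Theorem \ref{equality} applied to the left-hand term gives $\depth (I+J)^{n-1}/(I+J)^n \le \depth R/(I+J)^{n-1}$, which is precisely what Lemma \ref{depth}(vii) requires.

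The argument for (ii) follows the same template. The strict-gap hypothesis $\reg I^{i-1}/I^i + 2 \le \reg I^i/I^{i+1}$ yields the sharper comparison $\max_{i+j=n-1} \ge \max_{i+j=n-2} + 2$, so by Theorem \ref{equality} and induction one gets $\reg (I+J)^{n-1}/(I+J)^n \ge \reg R/(I+J)^{n-1} + 2 > \reg R/(I+J)^{n-1} + 1$, which is precisely the inequality demanded by Lemma \ref{reg}(vii) to conclude $\reg R/(I+J)^n = \reg (I+J)^{n-1}/(I+J)^n$. The main obstacle I anticipate is the careful bookkeeping needed to verify that the one-sided hypothesis on $I$ alone (rather than a symmetric condition on $J$) already suffices for these comparisons; the reason it does is that the index shift $(i,j) \mapsto (i+1, j)$ leaves the $J$-coordinate fixed.
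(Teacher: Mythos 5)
Your proposal is correct and takes essentially the same route as the paper's own proof: induction on $n$, Theorem \ref{equality} to identify $\depth$ (resp.\ $\reg$) of $(I+J)^{n-1}/(I+J)^n$, the one-sided hypothesis on $I$ (via the shift $(i,j)\mapsto(i+1,j)$, which fixes the $J$-coordinate) to compare the consecutive minima/maxima, and the short exact sequence with Lemma \ref{depth}(vii) (resp.\ Lemma \ref{reg}(vii)) to conclude $\depth R/(I+J)^n = \depth (I+J)^{n-1}/(I+J)^n$ and its regularity analogue. The only cosmetic difference is that the paper inducts directly on this last equality rather than on the full formula, and leaves the index-shift comparison implicit where you spell it out.
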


\begin{proof}
(i) By Theorem \ref{equality}.(i) we only need to prove that
$$\depth R/(I+J)^n = \depth (I+J)^{n-1}/(I+J)^n.$$
For $n =1$, this is trivial. For $n > 1$,  we may assume that $\depth R/(I+J)^{n-1} = \depth (I+J)^{n-2}/(I+J)^{n-1}$. By Theorem \ref{equality}.(i) and the assumption $\depth I^{i-1}/I^i  \ge \depth I^i/I^{i+1}$, $i \le n-1$, we can see that $\depth (I+J)^{n-2}/(I+J)^{n-1} \ge \depth (I+J)^{n-1}/(I+J)^n$.
Thus, $\depth R/(I+J)^{n-1} \ge \depth (I+J)^{n-1}/(I+J)^n$. Now, from the exact sequence
$$0 \to (I+J)^{n-1}/(I+J)^n \to R/(I+J)^n \to R/(I+J)^{n-1} \to 0$$
we get $\depth R/(I+J)^n = \depth (I+J)^{n-1}/(I+J)^n$. \par

(ii) As in the proof of (i) we only need to prove that
$$\reg R/(I+J)^n = \reg (I+J)^{n-1}/(I+J)^n$$
for $n > 1$. By induction we may assume that $\reg R/(I+J)^{n-1} = \reg (I+J)^{n-2}/(I+J)^{n-1}.$
From Theorem \ref{equality}.(ii) and the assumption $\reg I^{i-1}/I^i +2 \le \reg I^i /I^{i+1}$, $i \le n-1$, 
we can deduce that $\reg (I+J)^{n-2}/(I+J)^{n-1}+ 2 \le \reg (I+J)^{n-1}/(I+J)^n$. Thus,
$\reg R/(I+J)^{n-1}+2 \le \reg (I+J)^{n-1}/(I+J)^n$. Hence, from the above exact sequence
we get $\reg R/(I+J)^n = \reg (I+J)^{n-1}/(I+J)^n$.
\end{proof}

Theorem \ref{equality}.(i)  has a nice application on the Cohen-Macaulayness of $R/(I+J)^n$.

\begin{proposition}  \label{CM}
$R/(I+J)^i$ is Cohen-Macaulay for all $i \le n$ if and only if $A/I^i$ and $B/J^i$ are Cohen-Macaulay for all $i \le n$.
\end{proposition}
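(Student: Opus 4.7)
The plan is induction on $n$. For the base case $n=1$, the isomorphism $R/(I+J) \cong A/I \otimes_k B/J$ together with Lemma \ref{tensor} gives $\depth R/(I+J) = \depth A/I + \depth B/J$, while $\dim R/(I+J) = \dim A/I + \dim B/J =: d$ holds by the corresponding fact for tensor products over a field. Since $\depth \le \dim$ for each factor, $R/(I+J)$ is Cohen--Macaulay iff both $A/I$ and $B/J$ are.

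The inductive step ($n \ge 2$) exploits in both directions the short exact sequence
$$0 \to (I+J)^{n-1}/(I+J)^n \to R/(I+J)^n \to R/(I+J)^{n-1} \to 0,$$
together with Theorem \ref{equality}(i). Two preliminary observations drive the argument: $\dim R/(I+J)^m = d$ for every $m \ge 1$ because $\sqrt{(I+J)^m} = \sqrt{I+J}$, and $\dim I^i/I^{i+1} \le \dim A/I$ because $I$ annihilates this quotient (with the analogous bound on the $J$-side).

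For the direction ``$A/I^i, B/J^i$ Cohen--Macaulay for $i \le n \Rightarrow R/(I+J)^n$ Cohen--Macaulay,'' I would first apply Lemma \ref{depth}(ii) to $0 \to I^i/I^{i+1} \to A/I^{i+1} \to A/I^i \to 0$: the CM hypothesis gives $\depth A/I^{i+1} = \depth A/I^i = \dim A/I$, forcing $\depth I^i/I^{i+1} \ge \dim A/I$; combined with the automatic upper bound from the annihilator observation, this yields $\depth I^i/I^{i+1} = \dim A/I$ for $i \le n-1$, and symmetrically $\depth J^j/J^{j+1} = \dim B/J$ for $j \le n-1$. Plugging into Theorem \ref{equality}(i) produces $\depth (I+J)^{n-1}/(I+J)^n = d$, and combining with the inductive hypothesis $\depth R/(I+J)^{n-1} = d$, the short exact sequence forces $\depth R/(I+J)^n \ge d$, hence equality with the dimension.

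For the converse, the inductive hypothesis handles $A/I^i, B/J^i$ for $i \le n-1$, so only the $n$-th power remains. Lemma \ref{depth}(ii) applied to the short exact sequence above forces $\depth (I+J)^{n-1}/(I+J)^n \ge d$, and specialising Theorem \ref{equality}(i) to the endpoint pair $(i,j) = (n-1, 0)$ (with $J^0/J^1 = B/J$) yields
$$\depth I^{n-1}/I^n + \depth B/J \ge d = \dim A/I + \dim B/J;$$
the inductive CM hypothesis on $B/J$ then gives $\depth I^{n-1}/I^n \ge \dim A/I$, and the automatic reverse inequality forces equality. Finally, Lemma \ref{depth}(i) on $0 \to I^{n-1}/I^n \to A/I^n \to A/I^{n-1} \to 0$ produces $\depth A/I^n \ge \dim A/I = \dim A/I^n$, so $A/I^n$ is Cohen--Macaulay; the symmetric endpoint $(i,j) = (0, n-1)$ handles $B/J^n$. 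The main delicate point is extracting information about individual quotients $I^i/I^{i+1}$ from the \emph{minimum} in Theorem \ref{equality}(i); isolating the two endpoint pairs $(n-1,0)$ and $(0,n-1)$ is what makes this extraction clean, after which the induction runs without further complication.
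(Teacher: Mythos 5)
Your proposal is correct and takes essentially the same approach as the paper: both arguments rest on Theorem \ref{equality}(i), the short exact sequences $0 \to (I+J)^{i-1}/(I+J)^i \to R/(I+J)^i \to R/(I+J)^{i-1} \to 0$ and $0 \to I^{i-1}/I^i \to A/I^i \to A/I^{i-1} \to 0$, and the bound $\depth I^{i-1}/I^i \le \dim I^{i-1}/I^i \le \dim A/I$. The only difference is organizational: the paper extracts Cohen--Macaulayness of all the quotients $I^{i-1}/I^i$ and $J^{j-1}/J^j$ for $i \le n$ simultaneously from the single module $(I+J)^{n-1}/(I+J)^n$ (since each summand's depth is capped by its dimension, the minimum attains $\dim R/(I+J)$ only if every summand does), whereas you run an induction on $n$ and read off only the endpoint pairs $(n-1,0)$ and $(0,n-1)$ at each step.
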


\begin{proof}
It is obvious that
$$\dim (I+J)^{n-1}/(I+J)^n\! = \dim R/(I+J)\! = \dim A/I+ \dim B/J\! = \dim I^i/I^{i+1} + \dim J^j/J^{j+1}.$$
By Corollary \ref{second bound}.(i), it can be seen that $\depth (I+J)^{n-1}/(I+J)^n = \dim R/(I+J)$ if and only if $\depth I^{i-1}/I^i = \dim A/I$ and $\depth J^{i-1}/J^i = \dim B/J$ for all $i \le n$. It follows that $(I+J)^{n-1}/(I+J)^n$ is Cohen-Macaulay if and only if $I^{i-1}/I^i$ and $J^{i-1}/J^i$ are Cohen-Macaulay for all $i \le n$. In particular, these conditions imply that $(I+J)^{i-1}/(I+J)^i$ are Cohen-Macaulay for $i \le n$.
\par
On the other hand, using induction and the exact sequence
$$0 \to (I+J)^{i-1}/(I+J)^i \to R/(I+J)^i \to R/(I+J)^{i-1} \to 0$$
we can prove that $(I+J)^{i-1}/(I+J)^i$ is Cohen-Macaulay for $i \le n$ if and only if $R/(I+J)^i$ is Cohen-Macaulay for $i \le n$. Similarly, $I^{i-1}/I^i$ and $J^{i-1}/J^i$ are Cohen-Macaulay for $i \le n$ if and only if $A/I^i$ and $B/J^i$ are Cohen-Macaulay for $i \le n$. The result is proved.
\end{proof}

The proof of Theorem \ref{CM} exhibits an interesting phenomenon that if $(I+J)^{n-1}/(I+J)^n$ is Cohen-Macaulay, then so is $R/(I+J)^i$ for $i \le n$. This implication does not hold in general for an arbitrary ideal. 

\begin{example}
{\rm Let $I = A = K[x,y,z]$ and $I = (x^4, x^3y, xy^3, y^4, x^2y^2z)$. Then $\dim A = 1$, $\depth A/I = 0$, and $\depth A/I^2 = 1$ (see \cite[Theorem 4.1]{HH}). From the exact sequence $0 \to I/I^2 \to A/I^2 \to A/I \to 0$ one can deduce that $\depth I/I^2 = 1$. Hence,  $I/I^2$ is Cohen-Macaulay but $A/I$ is not.}
\end{example}


\section{Asymptotic behavior of depth}

For an arbitrary ideal $I$, it is a well celebrated result of Brodman \cite{Br} that $\depth(A/I^n)$ is a constant function for $n \gg 0$.  There has been  much interest in estimating the stable value $\lim_{n\to \infty}\depth A/I^n$ and the stability index of the function $\depth A/I^n$ \cite{HH,HV,NV,Tr}. In this section, we describe the stable value of $\depth R/(I+J)^n$ in terms of those of $\depth A/I^n$ and $\depth B/J^n$. \par

We first recall the following result of Herzog and Hibi.

\begin{lemma} \label{HH} \cite[Theorem 1.2]{HH}
For an arbitrary homogeneous ideal $I$,  $\depth I^{i-1}/I^i$ is a constant for $i \gg 1$ and
$$\lim_{i\to \infty}\depth A/I^i = \lim_{i \to \infty} \depth I^{i-1}/I^i.$$
\end{lemma}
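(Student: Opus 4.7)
The plan is a two-stage argument: first establish that $\depth I^{i-1}/I^i$ is eventually constant in $i$, and second, identify the stable value with $d := \lim_{i\to\infty} \depth A/I^i$, whose existence is Brodmann's theorem.

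For the stabilization I would pass to the associated graded ring $\gr_I(A) = \bigoplus_{i\ge 0} I^i/I^{i+1}$, which is a finitely generated $\NN$-graded $A/I$-algebra, with each component $I^i/I^{i+1}$ a finitely generated $A/I$-module annihilated by $I$. The bigraded local cohomology $\bigoplus_i H^j_\mm(I^i/I^{i+1})$ inherits the structure of a finitely generated graded module over (a homogeneous quotient of) the Rees algebra $\mathcal{R}(I) = \bigoplus_{i \ge 0} I^i$. Applying the standard ``graded-component'' form of Brodmann's theorem (see, e.g., Brodmann--Sharp), the vanishing of $H^j_\mm(I^i/I^{i+1})$ is eventually independent of $i$ for each $j$. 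Since $\depth I^{i-1}/I^i = \min\{j : H^j_\mm(I^{i-1}/I^i) \ne 0\}$, this gives the claimed stabilization.

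For the identification with $d$, apply Lemma \ref{depth}(ii) to the short exact sequence
$$0 \to I^{i-1}/I^i \to A/I^i \to A/I^{i-1} \to 0$$
to obtain $\depth I^{i-1}/I^i \ge \min\{\depth A/I^i,\ \depth A/I^{i-1} + 1\} = d$ for $i \gg 0$. The reverse inequality is the delicate direction. I would argue by contradiction: if $\depth I^{i-1}/I^i > d$ for all large $i$, then $H^d_\mm(I^{i-1}/I^i) = 0$, and the four-term piece
$$H^{d-1}_\mm(A/I^{i-1}) \longrightarrow H^d_\mm(I^{i-1}/I^i) \longrightarrow H^d_\mm(A/I^i) \longrightarrow H^d_\mm(A/I^{i-1})$$
of the long exact sequence (using $H^{d-1}_\mm(A/I^{i-1}) = 0$ for $i \gg 0$) forces an injection $H^d_\mm(A/I^i) \hookrightarrow H^d_\mm(A/I^{i-1})$ throughout a tail. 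Composing down to a fixed level $N$ produces injections $H^d_\mm(A/I^i) \hookrightarrow H^d_\mm(A/I^N)$, and the plan is to exploit the Artinianness of $H^d_\mm(A/I^N)$ together with the $\mathcal{R}(I)$-module structure of $\bigoplus_i H^d_\mm(A/I^i)$ to extract a contradiction with $H^d_\mm(A/I^i) \ne 0$ for infinitely many $i$.

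The main obstacle is precisely this last step: making the contradiction rigorous. Injectivity in an inverse system of Artinian modules does not, by itself, preclude persistent non-vanishing, so the argument must leverage the graded-module structure over $\mathcal{R}(I)$, presumably via an Artin--Rees-type comparison or a finiteness statement for the local cohomology of the Rees algebra itself. This is the essential content where Herzog--Hibi's proof does its real work.
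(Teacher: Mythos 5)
The paper does not prove this lemma at all --- it is quoted from Herzog--Hibi \cite[Theorem 1.2]{HH} --- so your proposal has to stand on its own, and it is incomplete exactly where you flag it, with a gap that is not a technicality. A tail of graded injections $H^d_\mm(A/I^i) \hookrightarrow H^d_\mm(A/I^{i-1})$ carries no contradiction in itself: in each internal degree the components are finite-dimensional and non-increasing, so the maps could perfectly well all become isomorphisms of nonzero Artinian modules, and since $H^d_\mm$ is not bounded below in internal degree there is nothing to push against. Worse, the structural leverage you hope to invoke is unavailable: your Stage~1 claim that $\bigoplus_i H^j_\mm(I^i/I^{i+1})$ is a \emph{finitely generated} graded module over $\R(I)$ is false in general (it is an $\R(I)$-module, but a finitely generated bigraded module has internal degrees bounded below, which nonzero graded local cohomology typically violates), and $\bigoplus_i A/I^i$ is likewise not a finitely generated $\R(I)$-module, since it needs the class of $1$ as a generator in every degree. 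There is also no blanket ``graded-component Brodmann theorem'' to cite: tameness of graded local cohomology fails in general, and in the present bigraded-over-a-field setting one must prove the stabilization, not quote it. So neither your stabilization step nor your contradiction step can run on local cohomology as written, even though both conclusions are true.

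The repair --- and, in essence, the actual mechanism behind \cite[Theorem 1.2]{HH} --- is to trade local cohomology for Betti numbers via Auslander--Buchsbaum: $\depth M = r - \max\{p \mid \Tor^A_p(M,k) \ne 0\}$. Computing $\Tor$ with the Koszul complex on $x_1,\dots,x_r$ shows that $\Tor^A_p(\gr_I(A),k)$ and $\Tor^A_p(\R(I),k)$ are finitely generated bigraded modules annihilated by $\mm$, hence finitely generated graded modules over the special fiber ring $\R(I)/\mm\R(I)$, a standard graded $k$-algebra; therefore their components in the power direction are zero for all $i \gg 0$ or nonzero for all $i \gg 0$. This gives at once the eventual constancy of $\depth I^{i-1}/I^i$, of $\depth I^i$, and hence of $\depth A/I^i$ (Brodmann). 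For the equality, run your injectivity scheme at this level: let $P$ be the stable value of $\pd_A I^i$ and suppose, toward a contradiction with $g>d$, that $\Tor^A_{P+1}(I^{i-1}/I^i,k)=0$ for $i \gg 0$. The sequences $0 \to I^i \to I^{i-1} \to I^{i-1}/I^i \to 0$ then yield degree-preserving injections $\Tor^A_P(I^i,k) \hookrightarrow \Tor^A_P(I^{i-1},k)$ of nonzero finite-dimensional graded vector spaces, which compose to inject $\Tor^A_P(I^{i+j},k)$ --- concentrated in internal degrees at least $(i+j)\cdot\operatorname{indeg}(I)$, which tends to infinity --- into the fixed bounded-degree space $\Tor^A_P(I^i,k)$, absurd for $j \gg 0$. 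Finite-dimensionality and the growing degree bound, both absent for Artinian local cohomology, are precisely what close the argument; with tameness this forces $\pd_A(I^{i-1}/I^i) = P+1$ for $i\gg 0$, i.e.\ $g \le d$, and your (correct) easy direction $g \ge d$ finishes the proof.
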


Let us denote by $s(I)$ the stability index of the function $\depth I^{i-1}/I^i$, that is, $s(I)$ is the least integer $m$ such that $\depth I^{i-1}/I^i = \depth I^i/I^{i+1}$ for $i \ge m$. Note that
$$\min_{i \ge 1} \depth I^{i-1}/I^i = \min_{i \le s(I)} \depth I^{i-1}/I^i.$$

For large enough $n$, we obtain the following formula for $\depth (I+J)^{n-1}/(I+J)^n$.

\begin{proposition} \label{d-asymptotic}
For $n \ge s(I)+s(J)-1$, we have \smallskip\par
$\depth (I+J)^{n-1}/(I+J)^n = $\smallskip\par
\centerline{$\displaystyle \min\big\{\lim_{i \to \infty}\depth I^{i-1}/I^i + \min_{j \le s(J)} \depth J^{j-1}/J^j, \min_{i \le s(I)}\depth I^{i-1}/I^i +  \lim_{j \to \infty}\depth J^{j-1}/J^j\big\}.$}
\end{proposition}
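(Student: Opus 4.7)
The plan is to apply Theorem \ref{equality}(i) with $n$ replaced by $n-1$ and then analyze the resulting finite minimum using the stability guaranteed by Lemma \ref{HH}. Writing $a_p := \depth I^{p-1}/I^p$, $b_q := \depth J^{q-1}/J^q$, $\alpha := \lim_{p\to\infty} a_p$, and $\beta := \lim_{q\to\infty} b_q$, the very definition of $s(I)$ and $s(J)$ gives $a_p = \alpha$ for all $p \ge s(I)$ and $b_q = \beta$ for all $q \ge s(J)$. After the reindexing $p = i+1$, $q = j+1$, Theorem \ref{equality}(i) reads
$$\depth (I+J)^{n-1}/(I+J)^n \,=\, \min_{\substack{p+q\, =\, n+1 \\ p,\, q \,\ge\, 1}}\, \{a_p + b_q\}.$$

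Next I would use the hypothesis $n+1 \ge s(I)+s(J)$ to rule out pairs with $p<s(I)$ and $q<s(J)$ simultaneously, since such a pair would satisfy $p+q \le s(I)+s(J)-2 < n+1$. Hence the minimum above equals the smaller of the two quantities $\mathrm{(A)}\;\alpha + \min_{1\le q \le n+1-s(I)} b_q$, obtained by restricting to pairs with $p \ge s(I)$ (on which $a_p = \alpha$), and $\mathrm{(B)}\;\beta + \min_{1\le p \le n+1-s(J)} a_p$, obtained by restricting to pairs with $q \ge s(J)$.

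The final step is to observe that $n \ge s(I)+s(J)-1$ is equivalent to the two inequalities $n+1-s(I) \ge s(J)$ and $n+1-s(J) \ge s(I)$. Since $b_q = \beta$ is constant for all $q \ge s(J)$, extending the range of $q$ past $s(J)$ contributes only the value $\beta = b_{s(J)}$, which is already accounted for; therefore $\min_{1 \le q \le n+1-s(I)} b_q = \min_{1 \le q \le s(J)} b_q$, and symmetrically $\min_{1 \le p \le n+1-s(J)} a_p = \min_{1 \le p \le s(I)} a_p$. Substituting back into (A) and (B) and taking the overall minimum recovers the asserted formula.

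The main obstacle, such as it is, is purely bookkeeping: one must be careful that the hypothesis $n \ge s(I)+s(J)-1$ is used in precisely the right place, namely to guarantee \emph{simultaneously} that every pair $(p,q)$ with $p+q=n+1$ falls into case (A) or (B), and that the resulting inner minima already see the complete initial segments $[1,s(I)]$ and $[1,s(J)]$. The weaker bound $n \ge s(I)+s(J)-2$ would only secure the first of these and would not be strong enough to identify the inner minima with those appearing in the statement.
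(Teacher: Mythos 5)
Your proposal is correct and matches the paper's own argument: both apply Theorem \ref{equality}(i) to reduce to $\min_{p+q=n+1}\{a_p+b_q\}$, use the constancy $a_p=\alpha$ for $p\ge s(I)$ and $b_q=\beta$ for $q\ge s(J)$ to note that every pair with $p+q=n+1\ge s(I)+s(J)$ falls into one of the two stable regimes, and then use $n+1-s(I)\ge s(J)$ and $n+1-s(J)\ge s(I)$ to identify the inner minima with $\min_{q\le s(J)}b_q$ and $\min_{p\le s(I)}a_p$. Your closing remark about where the hypothesis $n\ge s(I)+s(J)-1$ is genuinely needed is also accurate and mirrors the role it plays in the paper's proof.
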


\begin{proof}
It follows from Theorem \ref{equality}.(i) that
$$\depth (I+J)^{n-1}/(I+J)^n = \min_{i+j=n+1} \{\depth I^{i-1}/I^i + \depth J^{j-1}/J^j\}.$$
By the definition of $s(I)$ and $s(J)$, we have that
$\depth I^{i-1}/I^i = \lim_{i \to \infty}\depth I^{i-1}/I^i$ for $i \ge s(I)$ and $\depth J^{i-1}/J^j = \lim_{j \to \infty}\depth J^{j-1}/J^j$ for $j \ge s(J)$.

Consider $n \ge s(I) + s(J)-1$ and $i+j = n+1$. Then, $i \ge s(I)$ if $j \le s(J)$. Therefore,
$$\depth I^{i-1}/I^i + \depth J^{i-1}/J^j =
\begin{cases} \displaystyle \lim_{i \to \infty}\depth I^{i-1}/I^i  + \depth J^{i-1}/J^j\  \text{ if }\ j \le  s(J),\\
\displaystyle \depth I^{i-1}/I^i +  \lim_{j \to \infty}\depth J^{j-1}/J^j\ \text{ if }\ j \ge s(J). \end{cases}$$
Note that $i \le n-s(J)+1$ if and only if $j \ge s(J)$, and that
$$\min_{i \le n-s(J)+1} \depth I^{i-1}/I^i = \min_{i \le s(I)} \depth I^{i-1}/I^i$$
since $n-s(J)+1\ge s(I)$. Then we have \smallskip \par
$\displaystyle \min_{i+j=n+1} \{\depth I^{i-1}/I^i + \depth J^{j-1}/J^j\} =$\smallskip \par
\centerline{$\displaystyle \min\big\{\lim_{i \to \infty}\depth I^{i-1}/I^i + \min_{j \le s(J)} \depth J^{j-1}/J^j, \min_{i \le s(I}\depth I^{i-1}/I^i +  \lim_{j \to \infty}\depth J^{j-1}/J^j\big\}$}\par
\noindent for $n \ge s(I)+s(J)-1$, which yields the assertion.
\end{proof}

Proposition \ref{d-asymptotic} immediately gives the following bound for the index of stability of the function $\depth (I+J)^{n-1}\big/(I+J)^n$.

\begin{corollary} \label{stability}
$s(I+J) \le s(I)+s(J)-1$.
\end{corollary}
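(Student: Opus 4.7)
The plan is to read off the corollary as an immediate consequence of Proposition \ref{d-asymptotic}. By the definition of $s(I{+}J)$, it suffices to exhibit an integer $m$ such that
\[
\depth (I+J)^{i-1}/(I+J)^i = \depth (I+J)^i/(I+J)^{i+1} \quad\text{for all } i \ge m,
\]
and then $s(I{+}J)\le m$. Taking $m = s(I)+s(J)-1$ will do.

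Concretely, I would observe that the right-hand side of the formula in Proposition \ref{d-asymptotic} depends only on the invariants $\lim_{i\to\infty}\depth I^{i-1}/I^i$, $\lim_{j\to\infty}\depth J^{j-1}/J^j$, $\min_{i\le s(I)}\depth I^{i-1}/I^i$, and $\min_{j\le s(J)}\depth J^{j-1}/J^j$; none of these depend on $n$. Hence $\depth (I+J)^{n-1}/(I+J)^n$ is constant on the range $n\ge s(I)+s(J)-1$. Applying this for both $i$ and $i+1$ (both $\ge s(I)+s(J)-1$ as soon as $i \ge s(I)+s(J)-1$) yields the required equality, so $s(I+J)\le s(I)+s(J)-1$.

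There is essentially no obstacle: the work has already been done inside Proposition \ref{d-asymptotic}, whose formula is visibly independent of $n$. The only thing to verify in writing out the proof is the bookkeeping that both consecutive indices $i-1$ and $i$ fall into the stable range, which is clear as soon as $i\ge s(I)+s(J)-1$.
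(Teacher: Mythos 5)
Your proposal is correct and is exactly the paper's argument: the paper states the corollary as an immediate consequence of Proposition \ref{d-asymptotic}, whose formula for $\depth (I+J)^{n-1}/(I+J)^n$ is visibly independent of $n$ once $n \ge s(I)+s(J)-1$, which is precisely your observation. Your extra bookkeeping that both consecutive indices lie in the stable range is the only detail the paper leaves implicit, and you handle it correctly.
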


To derive a formula for $\lim_{n\to \infty}\depth R/(I+J)^n$ in terms of those of $A/I^i$ and $B/J^j$, we shall need the following observation.

\begin{lemma} \label{min}
$\displaystyle \min_{i \ge 1} \depth A/I^i =  \min_{i \ge 1} \depth I^{i-1}/I^i$.
\end{lemma}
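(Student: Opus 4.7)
The approach is to compare the two minima by exploiting the short exact sequence
$$0 \longrightarrow I^{i-1}/I^i \longrightarrow A/I^i \longrightarrow A/I^{i-1} \longrightarrow 0$$
together with the basic depth inequalities of Lemma \ref{depth}, parts (i) and (ii). I will establish the equality by proving the two inequalities separately, using only elementary consequences of these short exact sequences.

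For the inequality $\min_i \depth I^{i-1}/I^i \ge \min_i \depth A/I^i$, I will apply Lemma \ref{depth}(ii) to the sequence above for each $i \ge 1$, which gives
$$\depth I^{i-1}/I^i \ge \min\{\depth A/I^i,\ \depth A/I^{i-1}+1\} \ge \min_{j \ge 1} \depth A/I^j,$$
where for $i=1$ the claim is immediate since $I^0/I^1 = A/I$. Taking the minimum over $i$ then yields the desired inequality.

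For the reverse inequality $\min_i \depth A/I^i \ge \min_i \depth I^{i-1}/I^i$, I plan to use induction on $i$ to show that $\depth A/I^i \ge \min_{j \ge 1} \depth I^{j-1}/I^j$ for every $i$. The base case $i=1$ is immediate since $A/I = I^0/I^1$. For the inductive step, Lemma \ref{depth}(i) gives
$$\depth A/I^i \ge \min\{\depth I^{i-1}/I^i,\ \depth A/I^{i-1}\},$$
and both terms on the right are bounded below by $\min_{j \ge 1}\depth I^{j-1}/I^j$ (the first trivially, the second by induction). Taking the minimum over $i$ completes the argument.

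There is no real obstacle here: the proof is essentially a two-line consequence of Lemma \ref{depth}. The only minor care needed is to make sure the argument is honest at $i=1$, which is handled by the tautology $I^0/I^1 = A/I^1$.
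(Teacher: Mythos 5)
Your proof is correct, and it shares its skeleton with the paper's: both arguments run on the short exact sequences $0 \to I^{i-1}/I^i \to A/I^i \to A/I^{i-1} \to 0$, and your inductive step for $\min_i \depth A/I^i \ge \min_i \depth I^{i-1}/I^i$ is exactly the paper's first move. Where you genuinely diverge is the opposite inequality. You obtain it softly, applying Lemma \ref{depth}(ii) uniformly in $i$ to get $\depth I^{i-1}/I^i \ge \min\{\depth A/I^i,\ \depth A/I^{i-1}+1\} \ge \min_{j\ge 1}\depth A/I^j$. The paper instead argues at an extremal index: it takes the least $m$ with $\depth I^{m-1}/I^m = \min_{i\ge 1}\depth I^{i-1}/I^i$, notes $\depth A/I^{m-1} \ge \depth I^{m-1}/I^m$ from the already-established direction, and then uses the rigidity statement Lemma \ref{depth}(vii) on the sequence at $i=m$ to conclude the exact equality $\depth A/I^m = \depth I^{m-1}/I^m$. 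Your version is marginally more elementary and symmetric, needing only the two soft bounds (i) and (ii) and no choice of minimizer (which is harmless anyway, since depths are non-negative integers, so the minimum is attained); the paper's version buys a slightly stronger byproduct, namely that both minima are realized simultaneously at the same power $m$ --- the same extremal-index technique the paper reuses for regularity in the proof of Lemma \ref{max}. Your separate treatment of $i=1$ via the identification $I^0/I^1 = A/I$ is the right care to take, since the sequence degenerates there.
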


\begin{proof}
Let $m$ be the least integer such that
$$\depth I^{m-1}/I^m = \min_{i \ge 1}\depth I^{i-1}/I^i.$$
Using the short exact sequences $0 \to I^{j-1}/I^j \to A/I^j \to A/I^{j-1} \to 0$
for $j \le i$,  we get
$$\depth A/I^i \ge \min_{j \le i} \depth I^{j-1}/I^j \ge \depth I^{m-1}/I^m$$
for all $i \ge 1$. In particular, $\depth A/I^{m-1} \ge \depth I^{m-1}/I^m$.
Hence, from the short exact sequence
$0 \to I^{m-1}/I^m \to A/I^m \to A/I^{m-1} \to 0,$
we get $\depth A/I^m = \depth I^{m-1}/I^m$.
Thus, $\min_{i \ge 1} \depth A/I^i = \depth I^{m-1}/I^m.$
\end{proof}

\begin{remark}
In general, we do not have $\min_{i\ge 1}\depth A/I^i = \lim_{i\to \infty}\depth A/I^i$ (see, for example, \cite[Theorem 0.1]{BHH}).
\end{remark}

\begin{theorem} \label{lim}
$\displaystyle  \lim_{n\to \infty} \depth R/(I+J)^n =$\smallskip\par
\centerline{$\displaystyle  \min\big\{\lim_{i \to \infty}\depth A/I^i + \min_{j \ge 1}\depth B/J^j,\, \min_{i \ge 1} \depth A/I^i + \lim_{j \to \infty}\depth B/J^j\big\}.$}
\end{theorem}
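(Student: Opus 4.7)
The strategy is to transfer the formula of Proposition \ref{d-asymptotic}, which computes the stable value of $\depth (I+J)^{n-1}/(I+J)^n$ from data of $I$ and $J$ separately, over to the stable value of $\depth R/(I+J)^n$, and then to rewrite each of the four invariants that appear in terms of $\depth A/I^i$ and $\depth B/J^j$.

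First I would apply Lemma \ref{HH} to the single ideal $I+J \subset R$. This immediately yields
$$\lim_{n\to\infty}\depth R/(I+J)^n = \lim_{n\to\infty}\depth (I+J)^{n-1}/(I+J)^n,$$
so the problem reduces to computing the right-hand limit. Proposition \ref{d-asymptotic}, valid for all $n \ge s(I)+s(J)-1$, already expresses this limit as
$$\min\Big\{\lim_{i\to\infty}\depth I^{i-1}/I^i + \min_{j \le s(J)}\depth J^{j-1}/J^j,\ \min_{i \le s(I)}\depth I^{i-1}/I^i + \lim_{j\to\infty}\depth J^{j-1}/J^j\Big\}.$$

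Next I would translate each of the four constituent quantities. The two limit terms become $\lim_{i\to\infty}\depth A/I^i$ and $\lim_{j\to\infty}\depth B/J^j$ directly by Lemma \ref{HH} applied to $I$ and to $J$ separately. For the two minimum terms, observe that by the very definition of $s(J)$ the sequence $\depth J^{j-1}/J^j$ is constant for $j \ge s(J)$, equal to $\lim_{j\to\infty}\depth J^{j-1}/J^j$; in particular this stable value is already attained at $j = s(J)$, which lies in the range $j \le s(J)$. Consequently
$$\min_{j \le s(J)}\depth J^{j-1}/J^j \;=\; \min_{j \ge 1}\depth J^{j-1}/J^j,$$
and by Lemma \ref{min} this equals $\min_{j \ge 1}\depth B/J^j$. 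The symmetric argument transforms $\min_{i \le s(I)}\depth I^{i-1}/I^i$ into $\min_{i \ge 1}\depth A/I^i$. Substituting the four translated quantities into the expression above produces exactly the formula of Theorem \ref{lim}.

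I do not anticipate any significant obstacle; all the heavy lifting has been done in Proposition \ref{d-asymptotic}, Lemma \ref{HH}, and Lemma \ref{min}, and the remaining argument is essentially book-keeping. The only subtlety worth flagging is the extension of the index range from $\{j \le s(J)\}$ to $\{j \ge 1\}$ (and similarly for $i$); this relies on the fact that the stable value of $\depth J^{j-1}/J^j$ is realized inside the truncated range, so enlarging the range does not change the minimum.
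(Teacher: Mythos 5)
Your proposal is correct and follows essentially the same route as the paper's own proof: both reduce via Lemma \ref{HH} applied to $I+J$ to the stable value of $\depth (I+J)^{n-1}/(I+J)^n$, invoke Proposition \ref{d-asymptotic}, and then translate the four constituent quantities using Lemma \ref{HH} for the limits and Lemma \ref{min} for the minima. The one subtlety you flag---replacing $\min_{j \le s(J)}\depth J^{j-1}/J^j$ by $\min_{j \ge 1}\depth J^{j-1}/J^j$---is precisely the observation the paper records immediately after defining $s(I)$, so nothing is missing.
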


\begin{proof}
It follows from Lemma \ref{HH} and Proposition \ref{d-asymptotic} that
$$\lim_{n \to \infty}\depth R/(I+J)^n  =  \lim_{n \to \infty} \depth (I+J)^{n-1}/(I+J)^n =$$
\centerline{$\displaystyle \min\big\{\lim_{i \to \infty}\depth I^{i-1}/I^i + \min_{j \ge 1} \depth J^{j-1}/J^j, \min_{i \ge 1}\depth I^{i-1}/I^i +  \lim_{j \to \infty}\depth J^{j-1}/J^j\big\}.$}\par
By Lemma \ref{HH} and Lemma \ref{min}, we can replace $I^{i-1}/I^i$, $J^{j-1}/J^j$ by $A/I^i$, $B/J^j$ to obtain the assertion.
\end{proof}

We are unable to deduce from Corollary \ref{stability} a bound for the stability index of the function $\depth R/(I+J)^n$.
In general, there seems to be no relationships between the stability indices of the functions $\depth A/I^n$ and $\depth I^{n-1}/I^n$. \par

We shall now give two interesting applications on the depth functions.
Recall that an ideal is said to \emph{have a constant depth function} if the depth of all its powers are the same.
This notion was introduced by Herzog and Vladiou in \cite{HV}. The following result was proved in \cite{HV} under the additional assumption that the Rees rings of $I$ and $J$ are Cohen-Macaulay.

\begin{proposition} \label{HV} {\rm (cf. \cite[Theorem 1.1]{HV})}
Let $I$ and $J$ be squarefree monomial ideals.
Then $I + J$ has a constant depth function if and only if so do $I$ and $J$.
\end{proposition}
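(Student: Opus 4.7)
The plan is to use the limit formula of Theorem~\ref{lim} in tandem with the standard upper bound $\depth S/K^{n} \le \depth S/K$, valid for every squarefree monomial ideal $K$ in a polynomial ring $S$ and every $n \ge 1$. Write $d_1 = \depth A/I$ and $d_2 = \depth B/J$. Since $R/(I+J) \cong A/I \otimes_k B/J$, Lemma~\ref{tensor} gives $\depth R/(I+J) = d_1 + d_2$, and because $I+J$ is itself a squarefree monomial ideal in $R$, the squarefree bound provides the pointwise estimate $\depth R/(I+J)^n \le d_1 + d_2$ for all $n \ge 1$.

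For the ``if'' direction I would assume $\depth A/I^i = d_1$ and $\depth B/J^j = d_2$ for all $i,j \ge 1$. Substituting these constants into Theorem~\ref{first bound}(i) collapses the right-hand side to $\min\{d_1+d_2+1,\,d_1+d_2\} = d_1 + d_2$, producing the matching lower bound $\depth R/(I+J)^n \ge d_1 + d_2$ for every $n$. Combined with the squarefree upper bound above, $\depth R/(I+J)^n = d_1 + d_2$ is forced to be constant.

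For the ``only if'' direction I would suppose $\depth R/(I+J)^n = c$ for every $n$. Evaluating at $n=1$ identifies $c = d_1 + d_2$, and Theorem~\ref{lim} then rewrites this as
\[
d_1 + d_2 = \min\Bigl\{\lim_{i \to \infty}\depth A/I^i + \min_{j \ge 1}\depth B/J^j,\; \min_{i \ge 1}\depth A/I^i + \lim_{j \to \infty}\depth B/J^j\Bigr\}.
\]
The squarefree upper bound, applied separately to $I$ and $J$, gives $\depth A/I^i \le d_1$ and $\depth B/J^j \le d_2$ for every $i,j$, so each of $\lim_i \depth A/I^i$, $\min_i \depth A/I^i$ is at most $d_1$ (and likewise at most $d_2$ for $J$), and hence both terms inside the minimum are at most $d_1 + d_2$. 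Equality in the minimum therefore forces $\min_{i \ge 1}\depth A/I^i = d_1$; combining this with the upper bound $\depth A/I^i \le d_1$ yields $\depth A/I^i = d_1$ for every $i$, and the symmetric argument delivers $\depth B/J^j = d_2$ for every $j$.

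The only non-routine step is pinning down the squarefree upper bound $\depth S/K^n \le \depth S/K$, which is the sole input that actually uses the squarefree hypothesis; once it is in hand the proof is a clean two-sided squeeze driven by Theorem~\ref{lim} and Theorem~\ref{first bound}. Without this bound the statement would fail in general, since \cite[Theorem~4.1]{HH} constructs (non-squarefree) monomial ideals with strictly non-constant depth functions that attain values above $\depth A/I$ at intermediate powers, blocking both implications.
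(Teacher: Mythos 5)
Your proposal is correct and follows essentially the same route as the paper's proof: both hinge on the squarefree bound $\depth S/K^n \le \depth S/K$ (which the paper extracts from the proof of \cite[Theorem 2.6]{HTT}), using Theorem~\ref{lim} to run the squeeze in the ``only if'' direction and Theorem~\ref{first bound} together with Lemma~\ref{tensor} for the ``if'' direction. The only organizational difference is that you identify the constant as $d_1+d_2$ at $n=1$ and force equality in both terms of the minimum simultaneously, whereas the paper first compares $\depth R/(I+J)$ with the asymptotic values $\lim_i \depth A/I^i$ and $\lim_j \depth B/J^j$ before invoking Theorem~\ref{lim} a second time; the two squeezes are logically identical.
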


\begin{proof}
Assume that $I+J$ has a constant depth function.
Let $d = \lim_{n\to \infty} \depth A/I^n$ and $e =  \lim_{n \to \infty}\depth B/J^n$.
By Theorem $\ref{lim}$, we have $\depth R/(I+J) \le d+e$.
By the proof of \cite[Theorem 2.6]{HTT}, $\depth A/I^n \le \depth A/I$ and $\depth B/J^n \le \depth B/J$ for all $n \ge 1$. As a consequence,
$$d + e \le \depth A/I + \depth B/J = \depth R/(I+J).$$
Therefore, $\depth R/(I+J) = d+e$, $\depth A/I = d$ and $\depth B/J = e$.
By Theorem $\ref{lim}$, this implies that $\depth A/I = \min_{n \ge 1} \depth A/I^n$ and
$\depth B/J = \min_{n \ge 1} \depth B/J^n$. Thus, we must have $\depth A/I^n = \depth A/I$ and $\depth B/J^n = \depth B/J$ for all $n \ge 1$.
\par

Conversely, assume that $I$ and $J$ have constant depth functions.
By Theorem \ref{first bound}, we obtain
$$\depth R/(I+J)^n   \ge  \depth A/I +\depth B/J = \depth R/(I+J)$$
for all $n \ge 1$.  Following the proof of \cite[Theorem 2.6]{HTT}, we also get that $\depth R/(I+J)^n  \le \depth R/(I+J)$. Therefore, $\depth R/(I+J)^n = \depth R/(I+J)$  for all $n \ge 1$.
\end{proof}

It is easy to construct examples showing that Theorem \ref{HV} does not hold for non-squarefree monomial ideals.

\begin{example}
Take $I$ to be a monomial ideal with $\depth I^n/I^{n+1} = 0$ for all $n \ge 1$ (e.g. $\dim A/I = 0$) and $J$ a monomial ideal with $\depth B/J = 0$ such that $J$ does not have a constant depth function (see e.g. \cite{HH}). Then
$\depth (I+J)^{n-1}/(I+J)^n = 0$ for all $n \ge 1$ by Theorem \ref{equality}. This implies that $\depth R/(I+J)^n = 0$ for all $n \ge 1$.
\end{example}

Given a convergent non-negative integer valued function $f$, Herzog and Hibi \cite{HH} conjectured that there exists a monomial ideal $Q$ in a polynomial ring $R$ such that $\depth R/Q^n = f(n)$ for all $n \ge 1$. They proved the conjecture for all bounded non-decreasing functions and a special class of non-increasing functions. We complete the statement for all non-increasing functions in the following theorem.

\begin{theorem} \label{non-increasing}
For any non-increasing function $f\!: \NN \to \NN$ there exist a monomial ideal $Q$ in a polynomial ring $R$ such that $\depth R/Q^n = f(n)$ for all $n \ge 1$.
\end{theorem}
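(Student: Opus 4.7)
The plan is to reduce the statement via Proposition~\ref{variable}(i) to realising a specific class of depth functions, and then to build those functions using the non-decreasing construction of Herzog--Hibi \cite[Theorem~4.1]{HH} together with the sum-of-ideals tools developed in Sections~2 and~3.

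Since $f:\NN\to\NN$ is non-increasing and $\NN$-valued, it is eventually constant; write its drop points as $1=n_0<n_1<\cdots<n_m$ with $f(n)=a_k$ on $[n_k,n_{k+1})$ (setting $n_{m+1}=\infty$) and $a_0>a_1>\cdots>a_m\ge 0$. By Proposition~\ref{variable}(i), for any homogeneous ideal $I\subset A$ and any ideal $J=(y_1,\ldots,y_t)$ of linear forms in $B=k[y_1,\ldots,y_s]$,
\[
\depth R/(I+J)^n \;=\; \min_{i \le n}\depth A/I^i \;+\; (s-t),
\]
and the right-hand side is automatically non-increasing in $n$. Choosing $s-t=a_m$, the task reduces to constructing a monomial ideal $I\subset A$ whose depth function $g(i):=\depth A/I^i$ satisfies
$\min_{i \le n}g(i) = f(n)-a_m$ for all $n\ge 1$, which is a non-negative non-increasing function converging to~$0$.

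Such a $g$ must merely satisfy $g(n_k)=a_k-a_m$ at each drop point and $g(i)\ge a_k-a_m$ for $i\in[n_k,n_{k+1})$; many (non-monotonic) realisations are possible. To produce $g$, we construct $I$ as a sum $L_1+\cdots+L_m$ of monomial ideals in disjoint sets of variables. Each $L_k$ in its own polynomial ring $A_k$ is obtained from \cite[Theorem~4.1]{HH} and chosen so that $\depth A_k/L_k^n$ is a carefully calibrated bounded non-decreasing function that ``switches on'' precisely at power $n=n_k$, with jump height determined by the drop magnitude $a_{k-1}-a_k$. Iterating Theorem~\ref{first bound}, Theorem~\ref{equality}, and Proposition~\ref{variable}, one computes (or bounds tightly from below) $\depth A/I^n$ in terms of the depth functions of the individual $L_k$, and checks that the running minimum of $g$ matches $f-a_m$ on the nose.

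The hard part is the last verification: coordinating the dimensions of the $A_k$ and the non-decreasing jump data of the $L_k$ so that the depth function of the sum $I$ has running minimum \emph{exactly} $f-a_m$ at every $n$, not merely asymptotically. An inductive organisation on the number of drops $m$ looks cleanest: realise the first $m-1$ drops in one block of variables, then adjoin a fresh block carrying an $L_m$ responsible for the $m$-th drop, with Theorem~\ref{lim} controlling stable values after the last drop and Theorem~\ref{first bound} pinning down the finitely many earlier values. If a direct analysis of the sum of two such blocks proves awkward, one may invoke the ``large class of non-increasing functions'' already handled by Herzog--Hibi as a richer base case and bootstrap via Proposition~\ref{variable} applied repeatedly.
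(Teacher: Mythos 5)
Your reduction via Proposition~\ref{variable}(i) is sound: since the running minimum $\min_{i\le n}\depth A/I^i$ is automatically non-increasing and eventually constant, adjoining $a_m$ spare variables and an ideal of linear forms correctly reduces the theorem to producing a monomial ideal $I$ whose depth function has prescribed running minimum. But the heart of the argument --- constructing such an $I$ --- is missing, and the route you sketch cannot work. Your building blocks $L_k$ come from \cite[Theorem 4.1]{HH} and therefore have \emph{non-decreasing} depth functions, yet you need the depth function of $I=L_1+\cdots+L_m$ to \emph{drop} at each $n_k$. There is no tool to compute it: the only statement about $\depth R/(I+J)^n$ for general sums is Theorem~\ref{first bound}, which is an inequality, and Propositions~\ref{example 1} and~\ref{variable} show that either term of that bound may be the true value, i.e.\ no general formula exists, so ``iterating Theorem~\ref{first bound}, Theorem~\ref{equality}, and Proposition~\ref{variable}'' does not determine $\depth A/I^n$. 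Worse, even where an exact formula does hold --- Theorem~\ref{equality}, which applies to the quotients $Q^n/Q^{n+1}$ rather than to $R/Q^n$ --- it is a min-plus convolution $\min_{i+j=n}\{d_1(i)+d_2(j)\}$, and the min-plus convolution of non-decreasing functions is again non-decreasing; no combination of Herzog--Hibi non-decreasing blocks can ever create a strict drop at $n_k$. Your fallback on the ``large class of non-increasing functions'' already handled by Herzog--Hibi does not close this gap either: that class does not contain functions with arbitrarily prescribed drop points and drop heights, and realizing exactly those is the content of the theorem.

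The paper's proof avoids both obstacles by working throughout with the quotient depth function $\depth Q^{n-1}/Q^n$, for which Theorem~\ref{equality}(i) gives the \emph{exact} formula $\min_{i+j=n}\{\depth I^{i-1}/I^i+\depth J^{j-1}/J^j\}$ (after reindexing), and by supplying a genuinely \emph{decreasing} building block: Example~\ref{1-0}, an integral closure of a monomial ideal with $\depth I^{n-1}/I^n=1$ for $n<t$ and $=0$ for $n\ge t$, constructed via results on associated primes of integral closures and explicitly noted to lie outside the Herzog--Hibi class. A double induction (on $\sum_{n\le t}f(n)$, resp.\ on $f(t)$) peels off $1$ from $f$, adds back either this step-function block or a principal ideal $(y_1)\subset k[y_1,y_2]$ with constant quotient depth $1$, and Theorem~\ref{equality}(i) reassembles $f$ exactly. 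Only at the very end is $\depth R/Q^n$ recovered: since $f$ is non-increasing, $\depth R/Q^{n-1}\ge\depth Q^{n-1}/Q^n$ inductively, and the exact sequence $0\to Q^{n-1}/Q^n\to R/Q^n\to R/Q^{n-1}\to 0$ forces $\depth R/Q^n=\depth Q^{n-1}/Q^n=f(n)$. If you wish to salvage your running-minimum reduction, you would still need a decreasing block of the type of Example~\ref{1-0} and an exact (not one-sided) computation of the resulting depth function; the quotient-module formulation is precisely what supplies both.
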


\begin{proof}
We shall first show that there exists a monomial ideal $Q$ in a polynomial ring $R$ over $k$ such that $\depth Q^{n-1}/Q^n = f(n)$ for all $n \ge 1$.  Let $t$ be the least integer such that $f(n) = f(n+1)$ for all $n \ge t$. \par
Assume that $f(t) = 0$. Then $f(n) = 0$ for all $n \ge t$.
If $t = 0$ then choose $Q$ to be any monomial ideal with $\dim R/Q = 0$.
If $t > 0$ then consider the function $g(n) = f(n)-1$ for $n < t$, $g(n) = 0$ for $n \ge t$.
Let $v$ be the least integer such that $g(n) = g(n+1)$ for all $n \ge v$.
Using induction on $\sum_{n =1}^tf(n)$, we may assume that there exists a monomial ideal $I$ in a polynomial ring $A$ such that $\depth I^{n-1}/I^n = g(n)$ for all $n \ge 1$.
Choose $J$ to be a monomial ideal in a polynomial ring $B$ such that $\depth J^{n-1}/J^n = 1$ for $n < t-v$ and $\depth J^{n-1}/J^n = 0$ for $n > t-v$ (see Example \ref{1-0}). Set $Q = I+J$.
By Theorem \ref{equality}.(i), it can be easily verified that $\depth Q^{n-1}/Q^n = f(n)$.
\par
If $f(t) > 0$ then then $f(n) > 0$ for all $n \ge 1$. Consider $g(n) = f(n)-1$ for all $n \ge 1$.
Using induction on  $f(t)$, we may assume that there exists a monomial ideal $I$ in a polynomial ring $A$ such that $\depth I^{n-1}/I^n = g(n)$ for all $n \ge 1$. Choose $J$ to be a monomial ideal in a polynomial ring $B$ such that $\depth J^{n-1}/J^n = 1$ for all $n \ge 1$ (e.g. $J = (y_1) \subset k[y_1,y_2] = B$). Put $Q = I+J$.
Then Theorem \ref{equality}(i) yields $\depth Q^{n-1}/Q^n = f(n)$ for all $n \ge 1$. \par
Once there exists a homogeneous ideal $Q$ such that $\depth Q^{n-1}/Q^n = f(n)$ for all $n \ge 1$, we can proceed to show that $\depth R/Q^n = \depth Q^{n-1}/Q^n$ as follows. The assertion is trivial for $n=1$. For $n \ge 2$, we may assume that $$\depth R/Q^{n-1} = \depth Q^{n-2}/Q^{n-1} < \depth Q^{n-1}/Q^n.$$
Then, using the short exact sequence
$0 \to Q^{n-1}/Q^n \to R/Q^n \to R/Q^{n-1} \to 0$, we deduce that
$\depth R/Q^n = \depth Q^{n-1}/Q^n $.
\end{proof}

\begin{example} \label{1-0}
Let $I$ be the integral closure of the ideal
$(x_1^{3t}, x_1 x_2^{3t-2}x_3, x_2^{3t-1}x_3)^3$, for some $t \ge 1$, in the ring $A=k[x_1,x_2,x_3]$. Then, $\dim A/I=1$ and $I^n$ is integrally closed for $n\geq 1$.

By \cite[Proposition 4]{Tr1}, $\Ass I^{n-1}/I^n=\Ass A/I^n$, where $\Ass$ denotes the set of the associated primes. By \cite[Example, p.~54]{Tr1}, $(x_1,x_2,x_3) \in \Ass A/I^n$ if and only if $n \ge r$. Thus,
$\depth I^{n-1}/I^n = 1$ for $n < t$ and $\depth I^{n-1}/I^n = 0$ for $n \ge t$.

Note that examples with this property do not belong to the class of ideals with non-increasing depth functions constructed by Herzog and Hibi in \cite{HH}.
\end{example}


\section{Asymptotic behavior of regularity}

For an arbitrary homogeneous ideal $I$ in $A$, it is well known that $\reg I^n = dn +e$ for $n \gg 0$, where $d$ is the minimum of the maximal generating degree of a homogeneous reduction of $I$ and $e$ is a non-negative integer \cite{CHT, Ko}. Recall that a homogeneous ideal $Q \subseteq I$ is called a reduction of $I$ if $I^{n+1} = QI^n$ for some $n \ge 1$.
We denote by $\lin(I)$  the least natural number $m$ such that $\reg I^n = dn+e$ for $n \ge m$.
In general, it is hard to estimate $e$ and $\lin(I)$; see, for example, \cite{Be,Ch,EH,EU}. In this section, we investigate these invariants of $(I+J)$ in terms of those of $I$ and $J$. \par

We shall need the following lemmas on the relation between $\reg I^n$ and  $\reg I^{n-1}/I^n$.

\begin{lemma} \label{lin}
$\reg I^{n-1}/I^n = \reg I^n-1$ for $n \ge \lin(I)+1.$
\end{lemma}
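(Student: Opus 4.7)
The plan is to exploit the short exact sequence
$$0 \to I^n \to I^{n-1} \to I^{n-1}/I^n \to 0$$
together with the linear behavior of $\reg I^n$ in the stable range. Assume $n \ge \lin(I)+1$. By the definition of $\lin(I)$, both $n$ and $n-1$ lie in the range where the linear formula holds, so $\reg I^n = dn+e$ and $\reg I^{n-1} = d(n-1)+e = \reg I^n - d$.

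Next I would observe that $d \ge 1$. Indeed, $d$ is the minimum, over all homogeneous reductions $Q \subseteq I$, of the maximal generating degree of $Q$; since $I$ is a nonzero proper homogeneous ideal of $A$, every such reduction is a proper homogeneous ideal and hence has generators of positive degree. Therefore $\reg I^{n-1} = \reg I^n - d < \reg I^n$.

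With this strict inequality in hand, I can apply Lemma \ref{reg}(v) (with $M = I^n$, $N = I^{n-1}$, $P = I^{n-1}/I^n$) to the short exact sequence above to conclude
$$\reg I^{n-1}/I^n \;=\; \reg I^n - 1,$$
which is exactly the claim.

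The only place where anything could go wrong is the verification that $d \ge 1$, and this is immediate from the hypothesis that $I$ is nonzero and proper. No induction, no homological input beyond Lemma \ref{reg}, and no extra hypothesis on $I$ (such as having linear resolution or being generated in a single degree) is required. So there is no real obstacle; the lemma is essentially a bookkeeping consequence of the fact that in the linear range the jump $\reg I^n - \reg I^{n-1} = d$ is positive.
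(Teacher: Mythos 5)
Your proof is correct and follows essentially the same route as the paper: the strict inequality $\reg I^{n-1} = d(n-1)+e < dn+e = \reg I^n$ for $n \ge \lin(I)+1$, fed into the short exact sequence $0 \to I^n \to I^{n-1} \to I^{n-1}/I^n \to 0$ via Lemma \ref{reg}(v). Your explicit verification that $d \ge 1$ is a small point the paper leaves implicit, but otherwise the arguments coincide.
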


\begin{proof}
We have $\reg  I^n = dn + e  > d(n-1)+e = \reg I^{n-1}$ for $n \ge  \lin(I)+1$. Therefore, using the short exact sequence
$0 \to I^n \to  I^{n-1} \to I^{n-1}/I^n \to 0,$
we deduce that $\reg I^{n-1}/I^n = \reg I^n-1$ for $n \ge \lin(I)+1$.
\end{proof}

In general, $\lin(I)$ is not the least integer $m$ such that $\reg I^{n-1}/I^n$ becomes a linear function for $n \ge m+1$.

\begin{example}
Let $A = k[x_1,x_2]$ and $I = (x_1^3,x_1^2x_2,x_1x_2^2,x_2^3)$. Then $I^n = (x_1,x_2)^n$ for $n \ge 2$.
Therefore, it is easy to check that $\reg I = 4$ and $\reg I^n = 3n$ for $n \ge 2$, while $\reg A/I = 3$ and $\reg I^{n-1}/I^n = 3n-1$ for $n \ge 2$.
\end{example}

\begin{lemma} \label{max} For any positive integers $c$ and $t$ we have
$$\max_{n \le t} \{\reg I^{n-1}/I^n - cn\} = \max_{n \le t}\{\reg I^n - cn\}-1.$$
\end{lemma}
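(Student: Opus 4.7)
Set $r_n := \reg I^n$ and $s_n := \reg I^{n-1}/I^n$ for $n \ge 1$. The plan is to establish the two inequalities separately, the main tool being the short exact sequence
$$0 \to I^n \to I^{n-1} \to I^{n-1}/I^n \to 0$$
together with parts (iii) and (v) of Lemma \ref{reg} and the identity $\reg I = \reg A/I + 1$, which in our notation says $s_1 = r_1 - 1$.

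For the upper bound $\max_{n \le t}(s_n - cn) \le \max_{n \le t}(r_n - cn) - 1$, I would apply part (iii) of Lemma \ref{reg} to the above exact sequence to get $s_n \le \max\{r_n - 1,\ r_{n-1}\}$ for every $n \ge 2$. Subtracting $cn$ and using $c \ge 1$ (so that $r_{n-1} - cn = r_{n-1} - c(n-1) - c \le r_{n-1} - c(n-1) - 1$) yields
$$s_n - cn \le \max\{r_n - cn - 1,\ r_{n-1} - c(n-1) - 1\},$$
where both $n$ and $n-1$ lie in $[1, t]$. For $n = 1$ the identity $s_1 - c = (r_1 - c) - 1$ handles the boundary directly. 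Taking the maximum over $n \in [1, t]$ then gives the desired upper bound.

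For the reverse inequality, I would pick $n^*$ to be the smallest index in $[1, t]$ at which $\max_{n \le t}(r_n - cn)$ is attained, and show $s_{n^*} = r_{n^*} - 1$. If $n^* = 1$ this is just $s_1 = r_1 - 1$. If $n^* \ge 2$, minimality of $n^*$ forces the strict integer inequality $r_{n^*-1} - c(n^*-1) < r_{n^*} - cn^*$, equivalently $r_{n^*} - r_{n^*-1} > c \ge 1$; in particular $r_{n^*} > r_{n^*-1}$, so part (v) of Lemma \ref{reg} applied to the short exact sequence for $n = n^*$ delivers $s_{n^*} = r_{n^*} - 1$. In either case $s_{n^*} - cn^* = \max_{n \le t}(r_n - cn) - 1$, which suffices.

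The only slightly delicate point is the minimality argument in the second half: one genuinely needs $c \ge 1$ so that the strict inequality at $n^*$ upgrades from $r_{n^*} - cn^* > r_{n^*-1} - c(n^*-1)$ to $r_{n^*} > r_{n^*-1}$, which is what triggers clause (v) of Lemma \ref{reg}. This is the single place where positivity of $c$ is essential, and it explains why the hypothesis excludes $c = 0$.
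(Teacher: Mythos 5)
Your proof is correct, but it runs the paper's argument in mirror image. The paper works throughout with the quotient-ring sequences $0 \to I^{i-1}/I^i \to A/I^i \to A/I^{i-1} \to 0$: it anchors at the \emph{least} index $m$ where the left-hand maximum $\max_{n \le t}\{\reg I^{n-1}/I^n - cn\}$ is attained, derives the cumulative bound $\reg A/I^n \le \max_{i \le n} \reg I^{i-1}/I^i$, and uses minimality of $m$ together with $c \ge 1$ to force $\reg A/I^{m-1} + 1 < \reg I^{m-1}/I^m$, so that clause (vii) of Lemma \ref{reg} yields $\reg A/I^m = \reg I^{m-1}/I^m$; both inequalities then fall out at once from this single index. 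You instead work with the ideal-theoretic sequences $0 \to I^n \to I^{n-1} \to I^{n-1}/I^n \to 0$, prove the inequality $\le$ termwise via clause (iii), and obtain $\ge$ by anchoring at the least maximizer of the \emph{right}-hand side and invoking clause (v). Both arguments turn on the same ``first strict jump'' idea; yours separates the two inequalities more cleanly, while the paper's version additionally identifies an index with $\reg A/I^m = \reg I^{m-1}/I^m$, which is the form it reuses in later sections. One quibble: your closing remark mislocates the role of positivity. In your lower-bound step, $c \ge 0$ already suffices, since clause (v) only needs $r_{n^*} > r_{n^*-1}$, and the minimality inequality $r_{n^*} - r_{n^*-1} > c \ge 0$ delivers that for any non-negative $c$. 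The place where $c \ge 1$ is genuinely essential is your upper bound, in the shift $r_{n-1} - cn \le r_{n-1} - c(n-1) - 1$ (which you did use, correctly, in the first half); and indeed the lemma can fail for $c = 0$ when the regularity function drops: in Conca's example from Section 2 one has $\reg I = 9 > \reg I^2 = 8$, so clause (vi) gives $\reg I/I^2 = 9 > \max\{\reg I, \reg I^2\} - 1 = 8$.
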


\begin{proof}
Let $m \le t$ be the least positive integer such that
$$\reg I^{m-1}/I^m - cm = \max_{n \le t}\{\reg I^{n-1}/I^n - cn\}.$$
Using the short exact sequences $0 \to  I^{i-1}/I^i  \to A/I^i \to A/I^{i-1}\to 0$
for $i \le n$, we deduce that $\reg A/I^n \le  \max_{i \le n} \reg I^{i-1}/I^i.$
Thus,
$$\reg A/I^n - cn \le \max_{i \le n}\{ \reg I^{i-1}/I^i - ci\} \le \reg I^{m-1}/I^m - cm$$
for $n \le t$. In particular,
$$\reg A/I^{m-1} - c(m-1) \le \max_{i \le m-1}\{ \reg I^{i-1}/I^i - ci\} < \reg I^{m-1}/I^m - cm.$$
Therefore, $\reg A/I^{m-1} +1 < \reg I^{m-1}/I^{m}$.
Using the above short exact sequence for $i = m$, it follows that $\reg A/I^m = \reg I^{m-1}/I^m$. Hence, $\reg A/I^m - cm = \reg I^{m-1}/I^m - cm.$
This implies that
$$\max_{n \le t}\{\reg A/I^n - cn\} = \reg I^{m-1}/I^m - cm.$$
Note  that $\reg I^n = \reg A/I^n +1$. Then $\max_{n \le t}\{\reg I^n - cn\} = \reg I^{m-1}/I^m - cm+1$ and the conclusion follows.
\end{proof}

\begin{remark}
If $c = d$ and $t = \lin(I)$ we have  
$$\max_{n \le \lin(I)}\{\reg I^n - dn\} = \max_{n \ge 0}\{\reg I^n - cn\}.$$
If $\dim A/I = 0$ and $I$ is generated in a single degree, Eisenbud and Harris \cite[Proposition 1.1]{EH} proved that the function $\reg I^n - dn$ is non-decreasing  (see also \cite{Be}). In this case, if $\reg I^n - dn$ is not a constant function, $\reg I^n - dn$  has its maximum value at $n =1 < \lin(I)$. This shows that 
$\max_{n \le \lin(I)}\{\reg I^n - cn\}$ needs not be attained at $n = \lin(I)$.
\end{remark}

Using Theorem \ref{equality} we find the following asymptotic formula for $\reg (I+J)^{n-1}/(I+J)^n$.

\begin{proposition} \label{r-quotient}
Assume that $\reg I^n = dn + e$ and $\reg J^n = cn+f$ for $n \gg 0$, $c \ge d$.
Set $e^* = \max_{i \le \lin(I)}\{\reg I^i - ci\}$ and $f^* = \max_{j \le \lin(J)}\{ \reg J^j - dj\}$.
For $n \ge \lin(I)+\lin(J)$ we have
$$\reg (I+J)^{n-1}/(I+J)^n = \max\{d(n+1)+e+f^*, c(n+1) +f+e^*\}-2.$$
\end{proposition}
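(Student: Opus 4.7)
The plan is to combine Theorem \ref{equality}(ii) with Lemmas \ref{lin} and \ref{max} by splitting the relevant maximum into cases according to where each index lies relative to the stability thresholds $\lin(I)$ and $\lin(J)$. Theorem \ref{equality}(ii) reduces the claim to evaluating
$$\max_{i+j=n-1}\{\reg I^i/I^{i+1} + \reg J^j/J^{j+1}\}.$$
The hypothesis $n \ge \lin(I)+\lin(J)$ ensures $i<\lin(I)$ and $j<\lin(J)$ cannot hold simultaneously (since $i+j=n-1\ge \lin(I)+\lin(J)-1$), so the index set partitions into three disjoint pieces: (1) $i \ge \lin(I)$ and $j \ge \lin(J)$; (2) $i \le \lin(I)-1$, which forces $j \ge \lin(J)$; (3) $j \le \lin(J)-1$, which forces $i \ge \lin(I)$.

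On piece (2), Lemma \ref{lin} gives $\reg J^j/J^{j+1} = c(j+1)+f-1 = c(n-i)+f-1$, so the summand equals $[\reg I^i/I^{i+1} - ci] + cn + f - 1$. Using the identity $\reg I^i/I^{i+1} - ci = [\reg I^i/I^{i+1} - c(i+1)] + c$ and applying Lemma \ref{max} with $t=\lin(I)$ after the reindexing $n'=i+1$ yields
$$\max_{0 \le i \le \lin(I)-1}\{\reg I^i/I^{i+1} - ci\} = e^* + c - 1,$$
so the maximum on piece (2) equals $c(n+1) + e^* + f - 2$. A symmetric computation gives piece (3) max $= d(n+1) + e + f^* - 2$. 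For piece (1), Lemma \ref{lin} applied to both factors gives the linear-in-$i$ expression $(d-c)i + cn + d + e + f - 2$, to be maximized over $\lin(I) \le i \le n-1-\lin(J)$.

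The remaining step, which I expect to be the only real content beyond bookkeeping, is to check that piece (1) never exceeds the maximum of pieces (2) and (3). When $c = d$, piece (1) collapses to $d(n+1)+e+f-2$, and the inequality $e^* \ge \reg I^{\lin(I)} - c\lin(I) = e$, coming from the $i=\lin(I)$ term in the definition of $e^*$, shows this is dominated by piece (2). When $c > d$, the coefficient $d-c$ is negative so the maximum on piece (1) is attained at $i = \lin(I)$, giving $cn + (d-c)\lin(I) + d + e + f - 2$; subtracting this from piece (2) gives $(c-d)(\lin(I)+1) + (e^* - e)$, and the same bound $e^* \ge e - (c-d)\lin(I)$ forces this difference to be at least $c-d > 0$. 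Assembling the three pieces produces $\max\{c(n+1)+f+e^*,\, d(n+1)+e+f^*\} - 2$ as claimed. The most error-prone point throughout is the index shift between $\reg I^i/I^{i+1}$ (indexed by $i$) and $\reg I^{i+1}$ (indexed by $i+1$) when invoking Lemma \ref{max}; once that is kept straight, the argument is routine.
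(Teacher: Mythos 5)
Your proposal is correct and takes essentially the same route as the paper's proof: Theorem \ref{equality}(ii) combined with Lemmas \ref{lin} and \ref{max}, with the maximum split according to the thresholds $\lin(I)$ and $\lin(J)$. The only cosmetic difference is that you partition the index set into three pieces and check directly that the both-stable piece is dominated, whereas the paper folds that region into a two-case split using the monotonicity of $\reg I^i - ci = (d-c)i+e$ for $i \ge \lin(I)$ when $c \ge d$ --- which is exactly the inequality $e^* \ge e-(c-d)\lin(I)$ that your domination argument rests on.
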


\begin{proof}
By Theorem \ref{equality}.(ii), we have
$$\reg (I+J)^{n-1}/(I+J)^n = \max_{i+j=n+1}\{\reg I^{i-1}/I^i + \reg J^{j-1}/J^j\}.$$
Consider $n \ge \lin(I) + \lin(J)$ and $i+j = n+1$. It can be seen that $i \ge \lin(I)+1$ if $j \le \lin(J)$. By Lemma \ref{lin}, $\reg I^{i-1}/I^i = di+e-1$ if $i \ge \lin(I)+1$ and $\reg J^{j-1}/J^j = cj+f-1$ if $j \ge \lin(J)+1$. Therefore,
\par
$\reg I^{i-1}/I^i + \reg J^{j-1}/J^j =
\begin{cases} di + e-1 + \reg J^{j-1}/J^j &  \text{if }\ j \le  \lin(J),\\
\reg I^{i-1}/I^i + cj + f-1 & \text{if }\  j \ge \lin(J)+1.
\end{cases}
$ \par
\centerline{$= \begin{cases}
d(n+1) + e-1 + \reg J^{j-1}/J^j-dj & \text{if }\  j  \le  \lin(J),\\
c(n+1) + f -1 +\reg I^{i-1}/I^i-ci & \text{if }\  j \ge \lin(J)+1.
\end{cases}$}\smallskip \par
Note that $j \ge \lin(J)+1$ if and only if  $i \le n-\lin(J)$. By Lemma \ref{max}, 
\begin{align*}
\max_{j \le \lin(J)} \{\reg J^{j-1}/J^j-dj\} & = \max_{j \le \lin(J)} \{\reg J^j-dj\}-1 = f^*-1,\\
\max_{i \leq n-\lin(J)}\{\reg I^{i-1}/I^i-ci\} & = \max_{i \leq n-\lin(J)} \{\reg I^i-ci\}-1.
\end{align*}
For $i \ge \lin(I)$, we have $\reg I^i - ci = (d-c)i + e$,  which is a non-increasing function because $c\ge d$. 
Since $n - \lin(J) \ge \lin(I)$, this property implies
$$\max_{i \leq n-\lin(J)} \{\reg I^i-ci\} = \max_{i \le \lin(I)} \{\reg I^i -ci\}= e^*.$$
Therefore,
$$\max_{i \leq n-\lin(J)}\{\reg I^{i-1}/I^i-ci\} = e^*-1.$$ \par
Taking into account the maximum in the two cases $j \le \lin(I)$ and $j \ge \lin(J)+1$, we get
\begin{align*}
\max_{i+j=n+1}\{\reg  I^{i-1}/I^i + \reg J^{j-1}/J^j\} &= \max\big\{d(n+1) +e + f^*-2,
c(n+1) + f+e^*-2\big\}\\
&= \max\big\{d(n+1) +e + f^*,c(n+1) + f+e^*\big\}-2
\end{align*}
for $n \ge \lin(I)+\lin(J)$.
\end{proof}

We are now ready to give the asymptotic linear function of $\reg (I+J)^n$.

\begin{theorem} \label{r-asymptotic}
Assume that $\reg I^n = dn + e$ and $\reg J^n = cn+f$ for $n \gg 0$, $c \ge d$.
Set $e^* = \max_{i \le \lin(I)}\{\reg I^i - ci\}$ and $f^* = \max_{j \le \lin(J)}\{ \reg J^j - dj\}$.
Then, for $n \gg 0$, we have
$$\reg (I+J)^n =
\begin{cases} c(n+1)  + f+e^*-1 & \text{if } c > d,\\
 d(n+1) + \displaystyle \max\{f+e^*, e+f^*\}-1 & \text{if } c = d.
 \end{cases}$$
\end{theorem}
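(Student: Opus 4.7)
The strategy is to reduce the statement to the formula for $\reg(I+J)^{n-1}/(I+J)^n$ given in Proposition \ref{r-quotient}, using the fact that for $n \gg 0$,
$$\reg(I+J)^n = \reg(I+J)^{n-1}/(I+J)^n + 1,$$
and then to simplify the resulting maximum case by case.

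To establish this identity, I would invoke the Cutkosky--Herzog--Trung \cite{CHT} and Kodiyalam \cite{Ko} theorems, which guarantee that $\reg(I+J)^n$ is asymptotically a linear function of $n$ whose slope equals the minimum of the maximal generating degrees of a homogeneous reduction of $I+J$. Since $I+J$ is a nonzero proper homogeneous ideal, this slope is at least $1$, so $\reg(I+J)^n > \reg(I+J)^{n-1}$ for all sufficiently large $n$. Applying Lemma \ref{reg}.(v) to the short exact sequence
$$0 \to (I+J)^n \to (I+J)^{n-1} \to (I+J)^{n-1}/(I+J)^n \to 0$$
then yields $\reg(I+J)^{n-1}/(I+J)^n = \reg(I+J)^n - 1$.

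Combining this identity with Proposition \ref{r-quotient}, we obtain, for all $n \gg 0$ (and at least $\lin(I)+\lin(J)$),
$$\reg(I+J)^n = \max\{d(n+1) + e + f^*,\; c(n+1) + f + e^*\} - 1.$$
It remains to analyze the maximum. When $c > d$, the difference $(c-d)(n+1) + (f - e) + (e^* - f^*)$ tends to $+\infty$, so $c(n+1) + f + e^*$ strictly dominates for $n \gg 0$, recovering the first case of the theorem. When $c = d$, the two expressions share the linear part $d(n+1)$, and the maximum equals $d(n+1) + \max\{e + f^*, f + e^*\} - 1$, giving the second case.

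The main obstacle is the identity $\reg(I+J)^n = \reg(I+J)^{n-1}/(I+J)^n + 1$: without external input such as the Cutkosky--Herzog--Trung/Kodiyalam asymptotic linearity of $\reg(I+J)^n$, the short exact sequence only yields two-sided inequalities that do not pin down the regularity exactly. Once this identity is available, the rest of the argument is a routine case analysis on the maximum.
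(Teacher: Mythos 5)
Your proposal is correct and follows essentially the same route as the paper: the paper's proof consists precisely of applying Lemma \ref{lin} to $I+J$ (whose own proof is exactly your argument --- positivity of the slope from \cite{CHT, Ko} forces $\reg (I+J)^n > \reg (I+J)^{n-1}$, and the short exact sequence then gives $\reg (I+J)^{n-1}/(I+J)^n = \reg (I+J)^n - 1$) and then invoking Proposition \ref{r-quotient}. You have merely inlined the proof of Lemma \ref{lin} and made explicit the routine case analysis of the maximum that the paper leaves implicit.
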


\begin{proof}
By Lemma \ref{lin}, we have $\reg (I+J)^n = \reg (I+J)^{n-1}/(I+J)^n+1$ for $n \gg 0$.
Hence, the conclusion follows by applying Proposition \ref{r-quotient} for $n \gg 0$.
\end{proof}

We can also give an upper bound for $\lin(I+J)$ in terms of related invariants of $I$ and $J$. This follows from the following formula for $\reg (I+J)^n$.

\begin{proposition} \label{r-formula}
Assume that $\reg I^n = dn + e$ and $\reg J^n = cn+f$ for $n \gg 0$, $c \ge d$.
Set $e^* = \max_{i \le \lin(I)}\{\reg I^i - ci\}$ and $f^* = \max_{j \le \lin(J)}\{ \reg J^j - dj\}$.
For $n \ge \lin(I)+\lin(J)+1,$ we have
$$\reg (I+J)^n = \max\{d(n+1) +e+f^*, c(n+1)+f +e^*\}-1.$$
\end{proposition}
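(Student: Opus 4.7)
The plan is to deduce the formula from Proposition~\ref{r-quotient} by proving inductively that $\reg R/(I+J)^n = g(n)$ for every $n \ge \lin(I)+\lin(J)+1$, where
\[
g(n) := \max\{d(n+1)+e+f^*,\, c(n+1)+f+e^*\}-2;
\]
the displayed formula then follows from $\reg(I+J)^n = \reg R/(I+J)^n+1$. By Proposition~\ref{r-quotient}, $\reg(I+J)^{n-1}/(I+J)^n = g(n)$ for $n \ge \lin(I)+\lin(J)$, and since $c \ge d \ge 1$, the function $g$ is strictly increasing with step $g(n)-g(n-1) \in \{c,d\}$.

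The inductive step runs on the short exact sequence
\[
0 \to (I+J)^{n-1}/(I+J)^n \to R/(I+J)^n \to R/(I+J)^{n-1} \to 0. \qquad (\ast)
\]
Under the hypothesis $\reg R/(I+J)^{n-1} = g(n-1)$, Lemma~\ref{reg}(i) applied to $(\ast)$ gives $\reg R/(I+J)^n \le \max\{g(n), g(n-1)\} = g(n)$, and Lemma~\ref{reg}(vii) applied to $(\ast)$ produces the matching lower bound $\reg R/(I+J)^n = g(n)$ whenever the step of $g$ satisfies $g(n)-g(n-1) \ge 2$. To launch the induction at $n_0 = \lin(I)+\lin(J)+1$, I would iterate $(\ast)$ down to $R/(I+J)$ and use Corollary~\ref{second bound}(ii), Theorem~\ref{equality}(ii), Lemma~\ref{lin}, and the bounds $\reg I^i \le ci + e^*$, $\reg J^j \le dj + f^*$ (which are immediate from the definitions of $e^*$ and $f^*$) to check that every earlier quotient $\reg(I+J)^{t-1}/(I+J)^t$ with $t \le n_0 - 1$ is dominated by $g(n_0-1)$, thereby settling the base case $\reg R/(I+J)^{n_0-1} = g(n_0-1)$.

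The main obstacle is the borderline case in which the step of $g$ equals $1$; this can happen only when $d = 1$ and the $d(n+1)+e+f^*$ term realizes the maximum in $g(n)$. There Lemma~\ref{reg}(vii) is inapplicable and $(\ast)$ by itself only pins $\reg R/(I+J)^n$ to $\{g(n)-1,\, g(n)\}$. To exclude the smaller value I would return to the direct-sum decomposition of Proposition~\ref{decomposition} and select the summand $I^{i_0}/I^{i_0+1}\otimes_k J^{j_0}/J^{j_0+1}$ with $i_0+j_0 = n-1$ realizing the maximum in Theorem~\ref{equality}(ii); the Goto--Watanabe formula driving Lemma~\ref{tensor} supplies a nonzero graded piece of its local cohomology at internal degree $g(n)$. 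A careful chase of the connecting maps in the long exact sequence of $(\ast)$, leveraging the inductive control on $R/(I+J)^{n-1}$, should then show that this class persists as a nonzero element of $H^{\ast}_\MM(R/(I+J)^n)$ in the critical degree, forcing $\reg R/(I+J)^n \ge g(n)$ and closing the induction.
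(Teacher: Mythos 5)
For $d \ge 2$ your induction is essentially the paper's argument in different clothing: one needs the upper bound $\reg R/(I+J)^{n-1} \le \max\{dn+e+f^*,\,cn+f+e^*\}-2$, and then Proposition \ref{r-quotient} together with Lemma \ref{reg}(vii) applied to the exact sequence $(\ast)$ forces $\reg R/(I+J)^n = \reg (I+J)^{n-1}/(I+J)^n$, since the step of $g$ is at least $d \ge 2$. Two bookkeeping remarks: first, you only ever need the \emph{upper} bound on $\reg R/(I+J)^{n-1}$, not the equality you claim to establish in the base case --- which is fortunate, because iterating $(\ast)$ with Corollary \ref{second bound}(ii) can only yield an inequality. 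Second, your auxiliary bound $\reg J^j \le dj + f^*$ is \emph{false} for $j > \lin(J)$ when $c > d$: there $\reg J^j - dj = (c-d)j + f$ is increasing and eventually exceeds $f^*$. The valid estimate pairs the range $j \le \lin(J)$ (where $\reg J^j \le dj+f^*$ holds by definition of $f^*$) with the exact linearity $\reg I^i = di + e$ for $i \ge \lin(I)+1$, exactly as in the proof of Proposition \ref{r-quotient}; the paper instead extracts its upper bound from Theorem \ref{first bound}, with no induction at all.

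The genuine gap is the borderline case $d = 1$, which you correctly isolate but do not prove. Your plan --- pick the extremal summand in Proposition \ref{decomposition} and chase the connecting maps of $(\ast)$ to show the critical class of $H^{\ast}_\MM\big((I+J)^{n-1}/(I+J)^n\big)$ survives in $H^{\ast}_\MM\big(R/(I+J)^n\big)$ --- cannot be executed from your inductive hypothesis: to rule out that this class lies in the image of the connecting map from $H^{\ast-1}_\MM\big(R/(I+J)^{n-1}\big)$ you need degree-by-degree control of the graded pieces of the local cohomology of $R/(I+J)^{n-1}$ in the one critical internal degree, whereas the induction carries only the single number $\reg R/(I+J)^{n-1}$. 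When the step of $g$ equals $1$, the bound $\reg R/(I+J)^{n-1} \le g(n)-1$ is precisely compatible with nonzero cohomology of the quotient in the degree where it could kill your class, so ``should then show'' is a hope, not an argument. The paper closes this case by an entirely different and elementary observation: if $d = 1$ then $I$ has a reduction $Q$ generated by linear forms, and since $Q$ is prime and $I \subseteq \sqrt{I} = \sqrt{Q} = Q \subseteq I$, in fact $I = Q$ is itself generated by linear forms; hence $\lin(I)=1$, $e=0$, $e^* = 1-c$, and Proposition \ref{variable} gives the closed formula $\reg (I+J)^n = \max_{i\le n}\{\reg J^i - i\} + n$, which a short computation identifies with the claimed expression. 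Without this reduction (or a genuine substitute for the cohomology chase), your proof of the $d=1$ case is incomplete.
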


\begin{proof}
If $d = 1$, then $I$ has a reduction $Q$ generated by linear forms.
Since $Q$ is a prime ideal, $\sqrt{I} = Q \subseteq I$. This implies $I= Q$.
Hence, $\reg I^n = n$ for all $n \ge 1$. Thus, $\lin(I) = 1$, $e = 0$ and $e^* = 1-c$.
By Proposition \ref{variable},
$$\reg (I+J)^n = \max_{i \le n} \{\reg J^i  - i\}+n$$
for $n \ge 1$. For $\lin(J) \le i \le n$, $\reg J^i - i +n= ci + f - i +n = cn+ f + (1-c)(n-i)$. Hence
$\max_{\lin(J) \le i \le n} \{\reg J^i  - i\}+n = cn+f.$
From this it follows that
$$\max_{i \le n} \{\reg J^i  -i\}+n = \max\{f^*+n\!, cn+f\} = \max\{n+1+e+f^*\!, c(n+1)+f+e^*\}-1.$$
for $n \ge \lin(J)$. 
\par
If $d \ge 2$, we will show below that $\reg R/(I+J)^n = \reg (I+J)^{n-1}/(I+J)^n$
for $n \ge \lin(I)+\lin(J)+1$. By Proposition \ref{r-quotient}, this will imply the assertion.
\par
By Theorem \ref{first bound}, we have \smallskip\par
$\reg R/(I+J)^{n-1} \le$\par
\centerline{$\displaystyle\max_{i \in [1,n-2], \ j \in [1,n-1]} \{\reg A/I^{n-i-1} + \reg B/J^i + 1, \reg A/I^{n-j} + \reg B/J^j \}$ =}\par
\centerline{$\displaystyle\max_{i \in [1,n-2], \ j \in [1,n-1]} \{\reg I^{n-i-1} + \reg J^i -1, \reg I^{n-j} + \reg J^j -2\}.$}\par
\noindent Similar to the proof of Proposition \ref{r-quotient}, we can show that
$$\max_{j \in [1,n-1]}\{\reg I^{n-j} + \reg J^j\} \le \max\{nd+ e + f^*,nc+ f+e^*\}$$
for $n \ge \lin(I)+\lin(I)$. Therefore,
$$\max_{i \in [1,n-2]}\{\reg I^{n-i-1} + \reg J^i\} \le \max\{(n-1)d + e+ f^*,(n-1)c+ f+ e^*\}$$ 
for $n \ge \lin(I)+\lin(I)+1$. These bounds imply that
$$\reg R/(I+J)^{n-1} \le \max\{nd+ e+ f^* ,nc + f+ e^*\}-2.$$
\par
On the other hand, by Proposition \ref{r-quotient} we have
$$\reg (I+J)^{n-1}/(I+J)^n = \max\{(n+1)d + e+ f^*,(n+1)c+ f + e^*\}-2.$$
for $n \ge \lin(I)+\lin(I)$. Since  $c\ge d\ge 2$, it follows that 
$$\reg R/(I+J)^{n-1} +1 < \reg (I+J)^{n-1}/(I+J)^n$$
for $n \ge \lin(I)+\lin(I)+1$. Therefore, from the short exact sequence
$$0 \to (I+J)^{n-1}/(I+J)^n \to R/(I+J)^n \to R/(I+J)^{n-1}\to 0$$
we get $\reg R/(I+J)^n = \reg (I+J)^{n-1}/(I+J)^n$ for $n \ge \lin(I)+\lin(I)+1$.
\end{proof}

\begin{corollary}
If $c =d$ then $\lin(I+J) \le \lin(I)+\lin(J)+1$.
\end{corollary}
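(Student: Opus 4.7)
The plan is to read the bound directly off Proposition \ref{r-formula}. When $c=d$, both expressions inside the maximum have the same leading coefficient, so the formula
$$\reg(I+J)^n = \max\{d(n+1)+e+f^*,\ c(n+1)+f+e^*\}-1,\qquad n\ge \lin(I)+\lin(J)+1,$$
collapses to
$$\reg(I+J)^n = d(n+1)+\max\{e+f^*,\ f+e^*\}-1 = dn + \bigl(d+\max\{e+f^*,\ f+e^*\}-1\bigr).$$
This is already a linear function in $n$ of the form $dn + e'$, where $e' := d + \max\{e+f^*, f+e^*\} - 1$.

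The final step is to observe that this is precisely the asymptotic linear form of $\reg(I+J)^n$: by Theorem \ref{r-asymptotic}, the stable linear expression for $\reg(I+J)^n$ in the case $c=d$ has leading coefficient $d$ and constant term $d + \max\{e+f^*,\, f+e^*\} - 1$, matching $e'$ exactly. Hence the linear formula $\reg(I+J)^n = dn + e'$ holds for every $n \ge \lin(I)+\lin(J)+1$, which by the definition of $\lin(I+J)$ yields
$$\lin(I+J) \le \lin(I)+\lin(J)+1.$$

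There is essentially no obstacle here; the work has been done in Proposition \ref{r-formula}. The only point worth a sanity check is that the formula genuinely exhibits $\reg(I+J)^n$ as a single linear function (rather than the maximum of two distinct linear functions), which is exactly what the equal-leading-coefficient hypothesis $c=d$ guarantees.
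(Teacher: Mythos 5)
Your proof is correct and is exactly the argument the paper intends: the corollary is stated immediately after Proposition \ref{r-formula} with no separate proof, because when $c=d$ the maximum there collapses to the single linear function $d(n+1)+\max\{e+f^*,\,f+e^*\}-1$, which coincides with the asymptotic form in Theorem \ref{r-asymptotic}, so $\reg(I+J)^n$ is already linear for all $n\ge \lin(I)+\lin(J)+1$. Your sanity check on why the hypothesis $c=d$ is needed (otherwise the maximum of two linear functions with distinct slopes need not agree with the eventual linear function for small $n$) is precisely the right observation.
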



\end{document}